\documentclass[12pt,a4paper]{article}
\usepackage[T1]{fontenc}
\usepackage[english]{babel}
\usepackage[centertags]{amsmath}
\usepackage{amsfonts}
\usepackage{amssymb}
\usepackage{amsthm}
\usepackage{newlfont}
\usepackage{graphicx}
\usepackage{color}
\usepackage{indentfirst}
\usepackage{float}
\usepackage[inline]{enumitem}
\usepackage{url}
\usepackage{mathtools}

\newtheorem{thm}{Theorem}
\newtheorem{lem}[thm]{Lemma}
\newtheorem{cor}[thm]{Corollary}
\newtheorem{prop}[thm]{Proposition}

\theoremstyle{definition}
\newtheorem{rem}[thm]{Remark}

\newcommand{\blank}{{\mspace{1mu}\cdot\mspace{1mu}}}
\newcommand{\ZZ}{\mathbb{Z}}
\newcommand{\KK}{\mathbb{K}}
\newcommand{\FF}{\mathbb{F}}
\newcommand{\QQ}{\mathbb{Q}}
\newcommand{\RR}{\mathbb{R}}
\newcommand{\acts}{{\mspace{1mu}\curvearrowright\mspace{1mu}}}

\DeclareMathOperator{\Iso}{\mathrm{Iso}}

\DeclareMathOperator{\pr}{\mathrm{pr}}
\DeclareMathOperator{\supp}{\mathrm{supp}}
\DeclareMathOperator{\LS}{\mathrm{LS}}
\DeclareMathOperator{\ch}{\mathrm{char}}
\DeclareMathOperator{\Stab}{\mathrm{Stab}}
\DeclareMathOperator{\GL}{\mathrm{GL}}
\DeclareMathOperator{\SL}{\mathrm{SL}}
\DeclareMathOperator{\SA}{\mathrm{SA}}
\DeclareMathOperator{\GA}{\mathrm{GA}}
\DeclareMathOperator{\Fix}{\mathrm{Fix}}

\title{The Banach--Tarski paradox in complete discretely valued fields}

\author{Kamil Orzechowski\\
University of Rzesz\'ow\\ 
35-959 Rzesz\'ow, Poland\\
\texttt{kamilo@dokt.ur.edu.pl}}

\date{July 15, 2026}

\begin{document}

\maketitle

\begin{abstract}
  We prove some results related to the classical Banach--Tarski paradox in the setting of a field $\KK$ that is complete with respect to a discrete non-Archimedean valuation (e.g., when $\KK$ is the field $\QQ_p$ of $p$-adic numbers for a prime $p$). Namely, the field $\KK$, as well as all balls and spheres in $\KK$, admit a paradoxical decomposition with respect to the isometry group of $\KK$. Such decompositions can be realized using pieces with the Baire property if $\KK$ is separable. Under the additional assumption of local compactness of $\KK$ (e.g., when $\KK=\QQ_p$), any two bounded subsets of $\KK$ with nonempty interiors are equidecomposable with respect to the isometry group of $\KK$.
  Our results complete the study of paradoxical decompositions in the non-Archimedean setting, addressing the one-dimensional case and building on earlier work for higher-dimensional normed spaces over $\KK$ with respect to groups of affine isometries.

  \medskip

  {\footnotesize
  \noindent{\bf Keywords}: Banach--Tarski paradox, paradoxical decomposition, equidecomposability, non-Archimedean valuation, discretely valued field, Laurent series, Baire property.

  \smallskip

  \noindent{\bf MSC 2020 Classification}: Primary 12J25; Secondary 46B04, 46S10, 20E05, 05A18, 03E15, 54H05.}
\end{abstract}

\section{Introduction}

In their seminal paper \cite{BT}, Banach and Tarski proved two related results that are among the most surprising and counterintuitive theorems in mathematics. One asserts that
any ball in the Euclidean space $\RR^3$ can be partitioned into finitely many pieces that, after applying isometries of $\RR^3$, reassemble into two balls identical to the original one \cite[Lemma 21]{BT}. In modern terms, any ball in  $\RR^3$ is {\em $G$-paradoxical}, where $G$ denotes the isometry group of $\RR^3$. This phenomenon is known as the Banach--Tarski paradox.
The other result \cite[Theorem 24]{BT} (the ``strong'' paradox) states:

Any two bounded subsets $A$ and $B$ of $\RR^3$ with nonempty interiors are {\em $G$-equidecomposable}, i.e., they can be partitioned into
a finite number of pieces $A_1, \dots , A_n$ and $B_1, \dots ,B_n$, respectively, so that there exist isometries $g_1,\dots,g_n$ of $\RR^3$ with $g_k(A_k)=B_k$ for all $1\le k\le n$.

It is remarkable that these classical paradoxes are not true in the Euclidean spaces of dimension two or one.

\bigskip

The work of Banach and Tarski has inspired numerous refinements and generalizations. They include minimizing the number of pieces needed for a paradoxical decomposition (five is minimal for a ball in $\RR^3$), generalizations for balls and spheres in $\RR^n$, $n\ge 3$, as well as similar results in hyperbolic spaces. The monograph \cite{W} is an excellent source covering most of these topics.
Recently, Tomkowicz proved a general result that unifies several known paradoxes (in complete manifolds) and gives a natural setting for obtaining new ones \cite[Theorem 1.1]{Tomkowicz2017}.

Another area of research involves strengthening the Banach--Tarski paradox by obtaining results on paradoxical decompositions (and equidecomposability) using pieces with special (topological or measure-theoretic) properties. In 1930, Marczewski asked whether a ball in $\RR^3$ admits a paradoxical decomposition using pieces with the Baire property. This question was answered positively by Dougherty and Foreman \cite[Theorem 2.6]{DF}. 
More recently, Grabowski, M\'ath\'e, and Pikhurko established that, for certain group actions, any two measurable relatively compact sets of the same measure and with nonempty interiors are equidecomposable using measurable pieces \cite[Corollary 1.12]{GrMaPikhurko}. This result applies, for instance, to the action of the isometry group of $\RR^n$, $n\ge 3$, cf. \cite[Example 1.9]{GrMaPikhurko}.

\bigskip

In the recent papers \cite{KO-RACSAM} and \cite{KO-BBMS}, we investigated the existence of paradoxical decompositions in normed spaces $(\KK^n, \lVert\blank \rVert)$ over non-Archimedean valued fields $(\KK,\lvert \blank \rvert)$. Among other results, we proved that, for any nontrivial non-Archimedean valuation on $\KK$ and $n\ge2$, the normed space $(\KK^n,\lVert\blank\rVert_{\infty})$, as well as all balls and spheres in $\KK^n$, admit a paradoxical decomposition with respect to the group of affine isometries of $\KK^n$ using four pieces \cite[Theorem 3.10]{KO-BBMS}.

The situation for dimension $n=1$ is different.
Since the affine group $\GA{(1,\KK)}$ is amenable, $\KK$ is not $\GA{(1,\KK)}$-paradoxical.
Yet, there are some bounded subsets of $\KK$ that are paradoxical with respect to the group of affine isometries of $\KK$. We proved their existence in \cite[Theorem 5.10]{KO-RACSAM}, inspired by the paper \cite{TomkowiczBoundedSets}, which developed a theory of bounded paradoxical sets under actions of amenable but not supramenable groups.

It is known that the isometry group $\Iso{\KK}$ of a non-Archimedean valued field $\KK$ is far larger than the group of its affine isometries; see for instance \cite[Theorem 1.2]{Bishop} and \cite[Proposition 1]{Kubzdela}. Therefore, it is reasonable to ask whether some results on paradoxical decompositions and equidecomposability hold in $\KK$ with respect to this larger group $\Iso{\KK}$.

In this paper, we consider a complete discretely valued field $(\KK,\lvert\blank\rvert)$. Any such field is isometric to the field $\FF((T))$ of formal Laurent series over some field $\FF$ (Corollary \ref{cor:important}), provided that $\FF((T))$ is equipped with the valuation \eqref{eq:abs_val_Laurent}, defined in 2.2. Hence, without loss of generality, we may assume that $\KK=\FF((T))$. We will construct isometries witnessing a paradoxical decomposition of $\KK$ (or a ball, or sphere in $\KK$) via conjugation of suitable affine isometries of $(\KK^2,\lVert \blank\rVert_{\infty})$ by a certain additive bijection between $\KK$ and $\KK^2$.

\bigskip

Let us list our main theorems. Throughout this paper, by an {\em isometry} between metric spaces we mean a distance-preserving surjection. We assume below that $\KK$ is a complete discretely valued field and $\Iso{\KK}$ is its isometry group. 

\subsubsection*{Main results}

\begin{enumerate}
  \item The field $\KK$, all balls, and all spheres in $\KK$ are $\Iso{\KK}$-paradoxical using four pieces (Theorem \ref{th:main}).
  \item If $\KK$ is separable, then $\KK$, all balls, and all spheres in $\KK$ are $\Iso{\KK}$-paradoxical using six pieces with the Baire property (Theorem \ref{th:Baire_paradox}).
  \item If $\KK$ is locally compact, then any two bounded sets $A,B\subseteq \KK$ with nonempty interiors are $\Iso{\KK}$-equidecomposable.
  Moreover, if $A,B$ have additionally the Baire property, then they are $\Iso{\KK}$-equidecomposable using pieces with the Baire property (Theorem \ref{thm:strong_paradox}).
\end{enumerate}

In particular, all of the results stated above hold for the valued field $(\mathbb{Q}_p,\lvert\blank\rvert_p)$ of $p$-adic numbers, for any prime number $p$.

\section{Preliminaries}

\subsection{Non-Archimedean valued fields and normed spaces}

A {\em valuation} (or {\em absolute value}) on a field $\KK$ is a function $\lvert\blank\rvert\colon \KK\to[0,\infty)$ satisfying, for all $x,y\in \KK$, the following conditions:

\begin{enumerate*}[label=(\arabic*)]
    \item $\lvert x \rvert=0 \Leftrightarrow x=0$,
    \item $\lvert xy \rvert=\lvert x \rvert\lvert y \rvert$,
    \item $\lvert x+y \rvert\leq \lvert x \rvert+\lvert y \rvert$.
\end{enumerate*}\\
The pair $(\KK,\lvert\blank\rvert)$, written simply as $\KK$ if the valuation is known from the context, is then called a {\em valued field}.
If we replace condition (3) with the stronger one (3'): \(\lvert x+y\rvert\leq\max\{\lvert x\rvert,\lvert y\rvert\}\), then the valuation \(\lvert\blank\rvert\) is called {\em non-Archimedean}.

If $(X,d)$ is a metric space, $x_0\in X$ and $r>0$, we put:
\vspace{-0.5\baselineskip}
\[
\begin{aligned}
B[x_0,r] &:= \{x\in X: d(x,x_0)\leq r\}, \\
B(x_0,r) &:= \{x\in X: d(x,x_0)<r\}, \\
S[x_0,r] &:= \{x\in X: d(x,x_0)= r\}.
\end{aligned}
\]
\vspace{-0.5\baselineskip}

A metric $d$ on a set $X$ is called an {\em ultrametric} if it satisfies the {\em strong triangle inequality}:
$d(x,z) \leq \max\{d(x,y), d(y,z)\}$ for all $x,y,z \in X$. The pair $(X,d)$ is then called an {\em ultrametric space}. Any ultrametric space $(X,d)$ has the {\em isosceles property}: If $x,y,z \in X$ and $d(x,y)\neq d(y,z)$, then
$d(x,z) = \max\{d(x,y), d(y,z)\}$. Note that, in an ultrametric space, any point belonging to a ball (open or closed) of a given radius may play the role of its center. Moreover, each of the sets $B[x_0,r]$, $B(x_0,r)$, $S[x,r]$ is closed and open in the topology of $(X,d)$.

Each valuation $\lvert\blank\rvert$ on a field $\KK$ induces a metric $d\colon \KK\times \KK \to [0, \infty)$ by $d(x,y):=\lvert x-y \rvert$. If $\lvert\blank\rvert$ is non-Archimedean, $d$ is an ultrametric. We say that $(\KK,\lvert\blank\rvert)$ is {\em complete} if $(\KK,d)$ is a complete metric space.

Let $(\KK,\lvert\blank\rvert)$ be a fixed non-Archimedean valued field. We introduce the following notation:
$D_{\KK}= B[0,1]$, $\mathfrak{m}_{\KK}= B(0,1)$, and $D_{\KK}^{\times} = S[0,1].$
The ball $D_{\KK}$ is a subring of $\KK$ called the {\em ring of integers} (or {\em valuation ring}) of $\KK$; $\mathfrak{m}_{\KK}$ is the unique maximal ideal of $D_{\KK}$ and $D_{\KK}^{\times}$ is the group of units (i.e., invertible elements) of $D_{\KK}$ (cf. \cite[Lemma 1.2]{Schneider}). The quotient ring $D_{\KK}/\mathfrak{m}_{\KK}$ is a field called the {\em residue field} of $(\KK,\lvert\blank\rvert)$.

We denote by $\KK^{\times}$ the multiplicative group of $\KK$. The set $\lvert \KK^{\times} \rvert:=\{\lvert x \rvert\colon x\in \KK^{\times}\}$ is a subgroup of the multiplicative group $\RR_{+}$ and is called the {\em value group} of $\KK$.
If $\lvert \KK^{\times} \rvert=\{1\}$, the valuation $\lvert\blank\rvert$ is called {\em trivial}. If the set $\lvert\KK^{\times}\rvert\cap(0,1)$ has a greatest element $c$, then
$\lvert \KK^{\times} \rvert=\{c^n: n\in \mathbb{Z}\}$ and the valuation is called {\em discrete}. In the remaining case, $\lvert \KK^{\times} \rvert$ is a dense subset of $[0,\infty)$ and the valuation $\lvert\blank\rvert$ is called {\em dense}.

Observe that if $\lvert \KK^{\times}\rvert=\{c^n: n\in\ZZ\}$, where $c\in (0,1)$, then {$B(x_0, c^k)=B[x_0,c^{k+1}]$} for all $x_0\in\KK$, $k\in \ZZ$. Therefore, in the case of a discretely valued $\KK$, every ball is a closed one.

It is known that a non-Archimedean nontrivially valued field $(\mathbb{K}, \lvert\blank\rvert)$ is locally compact if and only if it is complete, the valuation $\lvert\blank\rvert$ is discrete, and the residue field $D_{\KK}/\mathfrak{m}_{\KK}$ is finite {\cite [Corollary 12.2]{UC}}.

A standard example of a non-Archimedean locally compact field is the field $(\QQ_p,\lvert\blank\rvert_p)$ of {\em $p$-adic numbers} for any prime $p$. It is defined as the completion of $\QQ$ with respect to the {\em $p$-adic valuation}, i.e.:
\[
    \lvert 0 \rvert_p:= 0, \; \left\lvert p^k \frac{m}{n}\right \rvert_p:=p^{-k} \quad \text{for } k,m,n\in\mathbb{Z}, \quad p\nmid m, p\nmid n.
\]

From now on, we will mostly consider fields $\KK$ that are complete with respect to a discrete non-Archimedean valuation $\lvert\blank\rvert$. Such a $\KK$ will be briefly called a {\em complete discretely valued field}. Sometimes it is useful to distinguish an element $\pi\in\KK$ satisfying $\lvert\pi\rvert=c$, where $c\in(0,1)$ is the generator of $\lvert \KK^{\times}\rvert$; such a fixed $\pi$ is called a {\em uniformizer}.

\bigskip

Let $X$ be a linear space over a non-Archimedean valued field $(\KK,\lvert\blank\rvert)$. A function $\lVert \blank \rVert \colon X\to [0, \infty)$ is called a {\em non-Archimedean norm} (briefly: norm) on $X$ if, for all $x,y \in X$ and $\alpha\in \KK$, we have the following conditions:

\begin{enumerate*}[label=(\arabic*)]
   \item $\lVert x \rVert =0 \Leftrightarrow x=0$,
   \item $\lVert \alpha x \rVert = \lvert \alpha \rvert \lVert x \rVert $,
   \item $\lVert x+y \rVert  \leq \max\{\lVert x \rVert ,\lVert y \rVert \}$.
\end{enumerate*}\\
The pair $(X,\lVert \blank \rVert )$ is then called a {\em non-Archimedean normed space} over $\KK$. If additionally $\left\{\lVert x\rVert\colon x\in X\setminus\{0\}\right\}=\lvert\KK^{\times}\rvert$, the normed space $(X,\lVert\blank\rVert)$ is called {\em strongly solid}.
Any non-Archimedean norm $\lVert \blank \rVert $ induces an ultrametric $d$ on $X$, namely $d(x,y):=\lVert x-y \rVert $. If $X$ is complete with respect to $d$, we call $X$ a {\em non-Archimedean Banach space}.

We note an important property of non-Archimedean Banach spaces. Namely, a series $\sum_{n=1}^{\infty} x_n$, where $x_n\in X$, is convergent if and only if $x_n \to 0$, see \cite[Theorem 2.1.3]{PG}.

If $g\colon X\to X$ is an affine mapping $g(x):=L(x)+\tau$ with linear part $L\colon X\to X$ and translation part $\tau\in X$, we will write $g=(L,\tau)$. Clearly, $g$ is an isometry of $(X,\lVert\blank\rVert)$ if and only if $L$ is.

\bigskip

Let $(\KK,\lvert\blank\rvert)$ be a non-Archimedean valued field and $n\ge 1$. Define
\[
\lVert \blank \rVert_{\infty}\colon \KK^n \to [0,\infty), \quad (x_1,\dots,x_n)\mapsto \max\{\lvert x_1\rvert, \dots, \lvert x_n\rvert\}.
\]
The function $\lVert\blank\rVert_{\infty}$ is a non-Archimedean norm on $\KK^n$ and is called the {\em maximum norm}. The normed space $(\KK^n,\lVert\blank \rVert_{\infty})$ is obviously strongly solid. It is a Banach space if $\KK$ is complete.

In analogy with the Euclidean norm on $\mathbb{R}^n$, one can define $\lVert \blank \rVert_{2}\colon \KK^n \to [0,\infty)$ by $(x_1,\dots,x_n)\mapsto (\lvert x_1\rvert^2 + \dots + \lvert x_n\rvert^2)^{1/2}$. However, this function is not a non-Archimedean norm on $\KK^n$ since it does not satisfy the defining condition (3)---the induced metric is not an ultrametric.

\medskip

According to the characterization of linear isometries of $(\KK^n,\lVert\blank\rVert_{\infty})$ stated in \cite[Theorem 2.2]{KO-RACSAM} (cf. \cite[Corollary 1.7]{vRNotes}), a linear mapping $L\colon \KK^2 \to \KK^2$ with matrix
$L=\left(\begin{smallmatrix}
  \alpha & \beta\\
  \gamma & \delta
\end{smallmatrix}\right)$ in the canonical basis of $\KK^2$
is an isometry of $(\KK^2, \lVert\blank\rVert_{\infty})$ if and only if $L\in\GL(2,D_{\KK})$, that is, $\lvert\det{L}\rvert=1$ and $\lvert\alpha\rvert,\lvert\beta\rvert,\lvert\gamma\rvert,\lvert\delta\rvert\le 1$.
Let $\SL(2,\KK):=\{A \in M_2(\KK): \det{A}=1\}$ denote the special linear group over $\KK$.

In \cite[Definition 2.1]{KO-BBMS}, we introduced certain specific families of groups of linear and affine transformations of $\KK^2$. We recall two of them, which will be useful later. Namely, for $\varepsilon\in (0,1)$, let:
\begin{align}
\SL(2,D_{\KK},\varepsilon)&:=\left\{\begin{pmatrix}
  \alpha & \beta\\
  \gamma & \delta
\end{pmatrix}
\in \SL(2,\KK): \lvert \alpha -1\rvert, \lvert \delta -1\rvert, \lvert \beta\rvert, \lvert \gamma\rvert \le \varepsilon\right\},\label{eq:def_of_SL(2,D_K,e)}\\
\SA(2,D_{\KK},\varepsilon)&:=\{(L,\tau)\in \SL(2,D_{\KK},\varepsilon)\times \KK^{2}: \quad \lVert\tau\rVert_{\infty} \le \varepsilon\}.\nonumber
\end{align}

Notice that the conditions $\lvert \alpha - 1\rvert, \lvert \delta-1\rvert \le \varepsilon < 1$ imply $\lvert\alpha\rvert=\lvert\delta\rvert =1$, so $\SL(2,D_\KK,\varepsilon)$ is a subgroup of $\GL(2,D_{\KK})$ for any $\varepsilon\in (0,1)$. Therefore, all elements of $\SL(2,D_{\KK},\varepsilon)$ and $\SA(2,D_{\KK},\varepsilon)$ are isometries of $(\KK^2,\lVert\blank\rVert_{\infty})$.

\subsection{Laurent sequences}

Let $R$ be a set of cardinality at least two with a distinguished element $0\in R$. By a {\em Laurent sequence} over $R$ we mean any function $\varphi \colon \ZZ \to R$ such that its {\em support} $\supp{\varphi}:=\{n\in \ZZ \colon \varphi(n)\ne 0\}$ is well-ordered.
We denote by $\LS{(R)}$ the set of all Laurent sequences over $R$.

For any number $c\in (0,1)$, the formula:
\[
d_c(\varphi,\psi):=\begin{cases}
                        c^{\min\{n\in \ZZ \colon \varphi(n)\ne \psi(n)\}} & \text{if } \varphi\ne \psi,\\
                        0 & \text{if } \varphi= \psi
                    \end{cases}
\]
defines an ultrametric on $\LS{(R)}$. We denote by $\LS{(R,c)}$ the resulting ultrametric space.
Note that if $R_1$ and $R_2$ are sets of equal cardinality and $c_1=c_2$, then the ultrametric spaces $\LS{(R_1,c_1)}$ and $\LS{(R_2,c_2)}$ are isometric.

If $R=\FF$ is a field, then a Laurent sequence over $\FF$ is called a {\em (formal) Laurent series} over $\FF$, and we write $\FF((T))$ for $\LS{(\FF)}$. A Laurent series $\varphi \in \FF((T))$ is traditionally written as the formal sum 
\begin{equation}\label{eq:formal_sum}
    \varphi = \sum_{n=-\infty}^{\infty} \varphi(n) T^n,
\end{equation}
 with an indeterminate symbol $T$.
Addition and multiplication are defined in the usual way in $\FF((T))$, see \cite[Definition 7.1]{FPS-survey}. Thus $\FF((T))$ becomes a field \cite[Theorem 7.2]{FPS-survey}, into which $\FF$ is canonically embedded.  For a given $c\in (0,1)$, we introduce the following non-Archimedean valuation on $\FF((T))$:
\begin{equation}\label{eq:abs_val_Laurent}
  \left\lvert \sum_{n=-\infty}^{\infty} \varphi(n) T^n
 \right\rvert:= c^{\min \supp{\varphi}} \, \text{ if } \varphi(k)\ne 0, \quad \lvert 0\rvert:=0.  
\end{equation}
In this way, $\FF((T))$ becomes a complete discretely valued field with the valuation ring $\FF[[T]]:=\{\varphi\in \FF((T))\colon \supp{\varphi}\subseteq \mathbb{N}_0\}$ and the residue field isomorphic to $\FF$, cf. \cite[Example 2 on p. 6]{Serre-LF}. The metric induced by the valuation \eqref{eq:abs_val_Laurent} is precisely the one of $\LS{(\FF,c)}$.

\subsection{Group actions and paradoxical decompositions}

By a {\em group action} $G\acts X$ of a group $G$ on a set $X$ we mean a function $G\times X \ni (g,x)\mapsto g.x\in X$ such that $(gh).x=g.(h.x)$ and $1.x=x$ for all $g,h\in G$,  $x\in X$. An {\em isomorphism} of group actions $G\acts X$ and $H \acts Y$ is a pair $(f,\Phi)$ consisting of a bijection $f\colon X \to Y$ and a group isomorphism $\Phi\colon G\to H$ such that
$\Phi(g).f(x)=f(g.x)$ for all $g\in G$, $x\in X$.

Let $X,Y$ be any sets and $G$ a group of bijections of $X$. For any bijection $f\colon X\to Y$, let $fGf^{-1}:=\{f\circ g\circ f^{-1}\colon g\in G\}$, which is a group of bijections of $Y$. Define $G \ni g \mapsto \Phi(g):=f\circ g \circ f^{-1}$. Then $(f,\Phi)$ is clearly an isomorphism of group actions $G\acts X$ and $fGf^{-1}\acts Y$, we call it the isomorphism {\em induced} by $f$.
In particular, if $X,Y$ are metric spaces and $\Iso{X},\Iso{Y}$ are their (full) isometry groups, then every isometry $f\colon X\to Y$ induces an isomorphism between the group actions $\Iso{X}\acts X$ and $\Iso{Y}\acts Y$.

A group action $G\acts X$ is called {\em locally commutative} if, for all $x\in X$, the {\em stabilizer} $\Stab_G{x}:=\{g\in G: g.x=x\}$ of $x$ is an Abelian subgroup of $G$. The action is {\em free} if each of the stabilizers is trivial. 
Suppose that $(f,\Phi)$ is an isomorphism of group actions $G\acts X$ and $H\acts Y$. Since ${\Phi(\Stab_G{x})=\Stab_H{f(x)}}$ for all $x\in X$, the properties of local commutativity and freeness are preserved by isomorphisms of group actions.

\bigskip

Let $G\acts X$ be a group action. We say that a subset $E\subseteq X$ is {\em $G$-paradoxical using $r$ pieces} \cite[Definition 5.1]{W} if, for some positive integers $m$, $n$ with $m+n=r$, there exist subsets $A_1,\dots,A_m,B_1,\dots,B_n$ of $E$ and elements $g_1,\dots,g_m,h_1,\dots,h_n\in G$ such that
\begin{equation}\label{eq:rparwzor}
    E=A_1 \sqcup \dots \sqcup A_m \sqcup B_1 \sqcup \dots \sqcup B_n =\bigsqcup_{i=1}^{m} g_i.A_i = \bigsqcup_{j=1}^{n} h_j.B_j,
\end{equation}
where the symbols $\sqcup$ and $\bigsqcup$ indicate that the components of the respective unions are pairwise disjoint.
In such a case the decomposition \eqref{eq:rparwzor} of $E$ is called {\em a $G$-paradoxical decomposition}. The empty set is obviously $G$-paradoxical (using two pieces).
In general, we do not require $E$ to be $G$-invariant; consider for example the classical Banach--Tarski paradox, where the unit ball in $\RR^3$ is not invariant under the isometry group of $\RR^3$ (the invariance is violated by translations).

We say that two sets $A, B\subseteq X$ are {\em $G$-equidecomposable} \cite[Def. 3.4]{W} if, for some $n\in\mathbb{N}$, there exist partitions
\begin{equation*}
        A=\bigsqcup_{i=1}^n A_i, \quad B=\bigsqcup_{i=1}^n B_i,
\end{equation*}
and elements $g_1,\dots,g_n\in G$ such that $g_i.A_i=B_i$ for all $1\leq i\leq n.$ Then we write $A\sim_{G} B$, and thus obtain an equivalence relation $\sim_{G}$ on the power set of $X$.

\begin{rem}\label{rem:isomorphic_actions}
If $(f,\Phi)$ is an isomorphism of group actions $G\acts X$ and $H\acts Y$, then any $E\subseteq X$ is $G$-paradoxical using $r$ pieces if and only if $f(E)\subseteq Y$ is $H$-paradoxical using $r$ pieces. Moreover, for any $A,B\subseteq X$, the condition $A\sim_{G}B$ is equivalent to $f(A)\sim_{H}f(B)$.
\end{rem}

It is known that, for a group action $G\acts X$, the set $X$ is $G$-paradoxical using four pieces if and only if $G$ contains a free subgroup $F_2$ of rank two such that the restricted action $F_2 \acts X$ is locally commutative. More precisely, $X$ has a decomposition
$X=A_1 \sqcup A_2 \sqcup A_3 \sqcup A_4 = A_1 \sqcup a.A_2 = A_3 \sqcup b.A_4$ for $a,b \in G$ if and only if $\{a,b\}$ is a basis of a free subgroup of rank two $F_2\le G$ whose action on $X$ is locally commutative \cite[Theorems 5.5 and 5.8]{W}.

The following result will be used in the proof of Theorem \ref{thm:strong_paradox}:\\
If $E\subseteq X$ is $G$-paradoxical, then $E\sim_{G}A$ for any $A\subseteq E$ such that
$E\subseteq g_1 A \cup \dots \cup g_n A$ for some $n\in\mathbb{N}$, $g_1,\dots,g_n \in G$.
Hence, any two such subsets $A', A''$ of $E$ are $G$-equidecomposable and $G$-paradoxical \cite[Corollary 10.22]{W}.

\subsection{Baire paradoxical decompositions}

We recall some notions from descriptive set theory.
Let $X$ be a topological space.
A subset $A\subseteq X$ has the {\em Baire property} (in $X$) if there exists an open subset $U$ of $X$ such that the symmetric difference $A\,\triangle\, U$ is a {\em meager} set in $X$, that is, a countable union of nowhere dense sets in $X$.
The family $\mathcal{B}(X)$ of all subsets of $X$ with the Baire property is a $\sigma$-algebra \cite[Proposition 8.22]{Kechris}.
Moreover, if $E\in\mathcal{B}(X)$ and $A\in \mathcal{B}(E)$, then $A\in \mathcal{B}(X)$; see \cite[\S~11.V.1,~p.~60]{Kuratowski}.

Let $E\in \mathcal{B}(X)$ and let $G$ act on $X$ by homeomorphisms. We say that $E$ is {\em Baire $G$-paradoxical} using $r$ pieces if $E$ is $G$-paradoxical using $r$ pieces and all of the pieces involved in the decomposition \eqref{eq:rparwzor} have the Baire property in $X$. 

Dougherty and Foreman provided a sufficient condition under which a {\em Polish space} (i.e., a separable completely metrizable topological space) $X$ is Baire $G$-paradoxical using six pieces, for a group $G$ acting on $X$ by homeomorphisms. Namely, it suffices that the action $G \acts X$ is locally commutative, free on a {\em comeager} set (i.e., on the complement of a meager set), and $G$ contains a free subgroup of rank two \cite[Theorem 5.9]{DF}.
More recently, Marks and Unger proved a general theorem that establishes the existence of Baire $G$-paradoxical decomposition, provided that $G$ acts on $X$ by Borel automorphisms and $X$ admits a usual ``set-theoretical'' $G$-paradoxical decomposition \cite[Theorem~1.1]{MU}. However, the cited result does not give an explicit upper bound of the number of pieces. We also mention that Wehrung proved that six is the minimal number of pieces needed for a Baire paradoxical decomposition of a compact metric space with respect to isometries \cite[Corollary]{Weh}.

\bigskip

In the proof of Theorem \ref{thm:strong_paradox}, we will use a criterion from \cite{GrMaPikhurko}, which contains powerful results on equidecomposability using pieces with some special properties (Borel, measurable, Baire---among others). The criterion can be adapted to our setting as follows: 

Let $G\acts X$ be a group action on a locally compact Polish space $X$ by homeomorphisms. Assume that any two relatively compact subsets of $X$ with nonempty interiors are $G$-equidecomposable.
Then, any two relatively compact sets $A,B\in \mathcal{B}(X)$ with nonempty interiors are Baire $G$-equidecomposable.

The statement is essentially a simplified version of \cite[Proposition 1.8\,(ii)]{GrMaPikhurko} for homeomorphic actions.

\section{Auxiliary results}

We begin with a proposition on the series expansion of elements of a strongly solid Banach space $X$ over a complete discretely valued field $\KK$. In the special case $X=\KK$, the result is well-known \cite[Theorem 12.1]{UC}.

\begin{prop}\label{prop:general_series_expansion}
Let $(X,\lVert\blank\rVert)$ be a strongly solid Banach space over a complete discretely valued field $(\KK,\lvert\blank\rvert)$ whose value group is generated by $c\in (0,1)$.
Let $\pi\in \KK$ with $\lvert \pi\rvert = c$, and let $R$ be a fixed set of representatives of the cosets of $B_{X}[0,1]$ modulo the subgroup $B_{X}(0,1)$; assume that $R$ contains the zero vector of $X$.

Then, for each $x\in X$, there exists a unique Laurent sequence $\varphi\in \LS{(R)}$ such that
\begin{equation}\label{eq:series_expansion_Banach_space}
  x = \sum_{n=-\infty}^{\infty}  \pi^n \varphi(n).  
\end{equation}
Moreover, the above correspondence $x \mapsto \varphi$ induces an isometry from $(X,\lVert\blank\rVert)$ onto $\LS{(R,c)}$.
\end{prop}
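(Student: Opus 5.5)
The plan is to mimic the classical digit expansion for $X=\KK$, replacing digits in the residue field by the representatives in $R$. The key facts are these. Since $X$ is strongly solid, every nonzero $x$ has $\lVert x\rVert=c^{k}$ for a unique $k\in\ZZ$. Then $\lVert \pi^{-k}x\rVert=1$, so $\pi^{-k}x\in B_X[0,1]\setminus B_X(0,1)$. Also, because the value group is $\{c^n\}$, the open ball $B_X(0,1)$ equals $B_X[0,c]=\pi B_X[0,1]$.

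\emph{Existence.} Define the digits recursively. For $x\neq 0$ with $\lVert x\rVert=c^{k}$, put $\varphi(n)=0$ for $n<k$. Let $\varphi(k)\in R$ be the representative of the coset of $\pi^{-k}x$ modulo $B_X(0,1)$; it is nonzero because $\pi^{-k}x\notin B_X(0,1)$. Set $x_1:=x-\pi^{k}\varphi(k)$, so that $\lVert x_1\rVert\le c^{k+1}$.

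Inductively, suppose $x_m:=x-\sum_{n=k}^{k+m-1}\pi^n\varphi(n)$ satisfies $\lVert x_m\rVert\le c^{k+m}$. Then $\pi^{-(k+m)}x_m\in B_X[0,1]$. Let $\varphi(k+m)\in R$ represent its coset; this digit is $0$ exactly when the element lies in $B_X(0,1)$. Then $\lVert x_{m+1}\rVert\le c^{k+m+1}$.

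The support of $\varphi$ lies in $[k,\infty)$, so it is well-ordered and $\varphi\in\LS(R)$. Every element of $R$ lies in $B_X[0,1]$, so $\lVert \pi^n\varphi(n)\rVert\le c^n\to 0$. By the cited convergence criterion \cite[Theorem 2.1.3]{PG}, the series converges, and the bound $\lVert x_m\rVert\to 0$ shows that its sum is $x$. Here the sum over $n\in\ZZ$ is interpreted as $\sum_{n\ge \min\supp\varphi}$, which makes sense because a well-ordered subset of $\ZZ$ is bounded below.

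\emph{Key lemma (valuation of a series).} Let $\varphi\in\LS(R)$ be nonzero with $k=\min\supp\varphi$. Then $\lVert\sum_n\pi^n\varphi(n)\rVert=c^{k}$. Indeed, $\varphi(k)\in R\setminus\{0\}$ is not in $B_X(0,1)$, since $0$ is the representative of that coset. Hence $\lVert\varphi(k)\rVert=1$ and $\lVert\pi^k\varphi(k)\rVert=c^k$. The tail satisfies $\lVert\sum_{n>k}\pi^n\varphi(n)\rVert\le c^{k+1}$ by the strong triangle inequality, which passes to limits. The isosceles property then gives the claim.

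\emph{Uniqueness.} Suppose $\varphi\ne\psi$ both represent $x$, and let $k$ be the least index with $\varphi(k)\ne\psi(k)$. Subtracting the two expansions gives
\[
0=\pi^k(\varphi(k)-\psi(k))+\sum_{n>k}\pi^n(\varphi(n)-\psi(n)).
\]
Since $\varphi(k)$ and $\psi(k)$ are distinct representatives, they lie in different cosets, so $\lVert\varphi(k)-\psi(k)\rVert=1$. The first term therefore has norm $c^k$, while the tail has norm at most $c^{k+1}$. The isosceles property makes the sum nonzero, a contradiction.

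\emph{Isometry.} Let $x,y$ have expansions $\varphi\ne\psi$, and let $k$ be the least index where they differ. The same computation gives $\lVert x-y\rVert=c^k=d_c(\varphi,\psi)$. So the map $x\mapsto\varphi$ is distance-preserving, hence injective.

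\emph{Surjectivity.} This is the step that genuinely uses completeness of $X$. Given any $\varphi\in\LS(R)$, its support is bounded below, so $\sum\pi^n\varphi(n)$ converges to some $x\in X$ by the convergence criterion. By uniqueness, the expansion of $x$ is exactly $\varphi$.

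The main obstacle I expect is bookkeeping rather than depth. One must check that the digit-extraction recursion is consistent with the representatives, in particular that $0\in R$ makes a leading digit nonzero exactly at the valuation. One must also confirm the identity $B_X(0,1)=\pi B_X[0,1]$, which uses strong solidity and the discreteness of the value group.
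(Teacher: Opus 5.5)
Your proof is correct and follows essentially the same route as the paper. Both use recursive digit extraction via $B_X(0,1)=\pi B_X[0,1]$, and both compare the leading difference $\lVert\varphi(k)-\psi(k)\rVert=1$ with a tail of norm at most $c^{k+1}$ to get uniqueness and the isometry property (the paper does these in one stroke), with completeness of $X$ giving surjectivity; your separate ``key lemma'' is not needed but harmless.
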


\noindent {\it Remark.} The summation symbol in \eqref{eq:series_expansion_Banach_space} denotes the limit of partial sums in $X$, taken with respect to the topology induced by $\lVert \blank \rVert$. Notice that the series is in fact one-sided since $\varphi(n)=0$ for sufficiently small $n$.

\begin{proof}
Let $x\in X$, $x\ne 0$.
We will proceed as in the proof of \cite[Theorem~12.1]{UC} by building recursively an appropriate Laurent sequence $\varphi\in \LS{(R)}$.

Since $(X,\lVert\blank\rVert)$ is strongly solid, there exists $m\in \ZZ$ such that $\lVert x \rVert=\lvert \pi^m \rvert$. We put $\varphi(n):=0$ for all $n<m$.
Let $\varphi(m)$ be the unique element of $R$ satisfying $\lVert \pi^{-m}x - \varphi(m)\rVert < 1$.
By the assumptions, $B_{X}(0,1)=\pi B_{X}[0,1]$, so there is a unique $\varphi(m+1)\in R$ with
${\lVert \pi^{-m-1}x - \pi^{-1}\varphi(m) - \varphi(m+1)\rVert < 1}$.
Continuing recursively, we obtain $\varphi\in \LS{(R)}$ such that
\[\left\lVert \pi^{-m-n}x - \sum_{k=0}^{n}\pi^{-n+k} \varphi(m+k)\right\rVert < 1 \quad \text{for all } n\ge 0.\]
Multiplying both sides of the last inequality by $\lvert \pi^{m+n}\rvert$ and passing to the limit as $n\to\infty$, we get
$x = \sum_{k=0}^{\infty}\pi^{m+k} \varphi(m+k)$, hence \eqref{eq:series_expansion_Banach_space}.

We will simultaneously prove that $\varphi$ is unique and the mapping $X\ni x\mapsto \varphi \in\LS{(R,c)}$ preserves distance.
Assume that $x, y\in X$ have expansions
$x = \sum_{n=-\infty}^{\infty}  \pi^n \varphi(n)$ and $y= \sum_{n=-\infty}^{\infty}  \pi^n \psi(n)$ for some $\varphi,\psi\in\LS{(R)}$, $\varphi\ne \psi$. Let $k:=\min\{n\in\ZZ \colon \varphi(n)\ne \psi(n)\}$, so that $d_c(\varphi,\psi)=c^k$. We have
\[x-y = \pi^k (\varphi(k)-\psi(k)) + \sum_{n=k+1}^{\infty} \pi^n (\varphi(n)-\psi(n)).\]
The elements $\varphi(k)$, $\psi(k)$ represent different cosets of $B_{X}[0,1]$ modulo $B_{X}(0,1)$, so $\lVert \varphi(k)-\psi(k)\rVert = 1$.
Since $\lVert \sum_{n=k+1}^{\infty} \pi^n (\varphi(n)-\psi(n)) \rVert \le c^{k+1} < c^{k} = \lVert \pi^k (\varphi(k)-\psi(k))\rVert$, we obtain $\lVert x-y\rVert=c^k=d_c(\varphi,\psi)$, as desired. In particular, $\lVert x- y\rVert >0$, so the expansion of each $x\in X$ is indeed unique.

Finally, since $X$ is complete, the sequence in \eqref{eq:series_expansion_Banach_space} converges for each $\varphi\in \LS{(R)}$. Therefore, the mapping $x\mapsto \varphi$ is onto $\LS{(R)}$. 
\end{proof}

Let us consider the case $X=\KK=\FF((T))$ with the valuation \eqref{eq:abs_val_Laurent}.
If we interpret $T$ as the indicator function of $\{1\}$, then, after putting $\pi:=T$ and $R:=\FF$, the expansion \eqref{eq:series_expansion_Banach_space} of $x:=\varphi$ looks exactly like \eqref{eq:formal_sum}. Therefore, the notation \eqref{eq:formal_sum} is meaningful in the analytic sense (as the limit of the partial sums), as well.

According to the following corollary, we can think of $\FF((T))$ as our model example of a complete discretely valued field with the residue field $\FF$ and the value group generated by $c$.

\begin{cor}\label{cor:important}
    Every complete discretely valued field $\KK$ with residue field $\FF$ and value group generated by $c$, $c\in (0,1)$, is isometric to $\FF((T))=\LS{(\FF,c)}$.
\end{cor}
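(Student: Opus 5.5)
The plan is to apply Proposition~\ref{prop:general_series_expansion} with $X=\KK$, viewed as a one-dimensional Banach space over itself. We then compare the resulting Laurent sequence space with $\LS{(\FF,c)}=\FF((T))$ using the remark that $\LS{(R_1,c)}$ and $\LS{(R_2,c)}$ are isometric whenever $|R_1|=|R_2|$.

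First, check the hypotheses. $\KK$ with its valuation $\lvert\blank\rvert$ as norm is a non-Archimedean normed space over $\KK$. It is complete because $\KK$ is a complete valued field. It is strongly solid, since $\{\lvert x\rvert\colon x\ne0\}=\lvert\KK^\times\rvert$ by definition. Its value group is $\{c^n\colon n\in\ZZ\}$, so a uniformizer $\pi$ with $\lvert\pi\rvert=c$ exists. In this case $B_X[0,1]=D_{\KK}$ and $B_X(0,1)=\mathfrak{m}_{\KK}$. Choose a set $R$ of representatives of $D_{\KK}/\mathfrak{m}_{\KK}$ containing $0$. Proposition~\ref{prop:general_series_expansion} then yields an isometry of $(\KK,\lvert\blank\rvert)$ onto $\LS{(R,c)}$.

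Next, since the residue field is $\FF$, the set $R$ is in bijection with $D_{\KK}/\mathfrak{m}_{\KK}\cong\FF$. Fix a bijection $\sigma\colon R\to\FF$ with $\sigma(0)=0$. This choice is possible because $0\in R$ represents the zero coset. Composing with $\sigma$ pointwise sends $\LS{(R)}$ bijectively onto $\LS{(\FF)}$, because supports are preserved and hence stay well-ordered. It also preserves $d_c$, because $\varphi(n)\neq\psi(n)$ if and only if $\sigma\varphi(n)\ne\sigma\psi(n)$. Hence $\LS{(R,c)}$ is isometric to $\LS{(\FF,c)}$. We have already noted that $\LS{(\FF,c)}$ carries exactly the metric induced by the valuation \eqref{eq:abs_val_Laurent} on $\FF((T))$. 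Composing the two isometries gives the claim.

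There is no real obstacle here. The only care needed is to observe that $R$ must contain $0$, so that $\sigma$ can fix $0$ and supports correspond. One should also note that "isometric" refers only to the metric structure, so no field isomorphism is claimed; in mixed characteristic, e.g.\ $\QQ_p$, no such isomorphism exists.
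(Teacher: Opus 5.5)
Your proposal is correct and follows the paper's proof: apply Proposition~\ref{prop:general_series_expansion} with $X=\KK$ to get an isometry $\KK\to\LS{(R,c)}$, then use $\lvert R\rvert=\lvert\FF\rvert$ to identify $\LS{(R,c)}$ with $\LS{(\FF,c)}$. You also spell out two steps the paper leaves implicit: the verification of the hypotheses (completeness and strong solidity), and the choice of a bijection $R\to\FF$ fixing $0$ so that supports, and hence membership in $\LS$, are preserved.
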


\begin{proof}    
An application of Proposition \ref{prop:general_series_expansion} yields an isometry $\KK \to \LS{(R,c)}$, for a set $R$ of representatives of the cosets of $D_{\KK}$ modulo $\mathfrak{m}_{\KK}$ with $0\in R$. Since by definition $\FF$ has the same cardinality as $R$, the spaces $\LS{(R,c)}$ and $\LS{(\FF,c)}$ are also isometric.
\end{proof}

\noindent\textit{Remark.} In a special case, Corollary \ref{cor:important} can be strengthened as follows. If additionally $\ch\KK=\ch\FF$, then $\KK$ is isometrically isomorphic to $\FF((T))$, see \cite[Ch.~II, \S4, Theorem~2]{Serre-LF}. In the mixed-characteristic case, one obtains only an isometry of metric spaces, not an isomorphism of valued fields.

\bigskip

Let $\FF$ be any field and $c\in (0,1)$. Throughout the rest of this section, we consider the field $\KK:=\FF((T))$ with the valuation defined by \eqref{eq:abs_val_Laurent}. 
The element $\pi:=T$ is a natural uniformizer in $\KK$.

Let us endow the linear space $X:=\KK^2$ over $\KK$ with the norm $\lVert \blank\rVert_{\infty}$. We choose $R:=\FF^{2}$ as a set of representatives of the cosets of $B_{X}[0,1]$ modulo the subgroup $B_{X}(0,1)$.

For every $n\in \ZZ$, we define the $n$-th {\em projection operator} $\pr_n \colon \KK^2 \to \FF^2$ as follows:
\[\pr_n{x}:=\varphi(n),\]
where $\varphi\in \LS{(R)}$ is the Laurent sequence satisfying \eqref{eq:series_expansion_Banach_space} for $x\in X$ under our choices of $\pi$ and $R$. Every projection operator is clearly $\FF$-linear. Moreover, it has the following property.

\begin{lem}\label{lem:continuity_discrete_topology}
For each $n\in\ZZ$, the operator $\pr_n\colon \KK^2\to \FF^2$ is continuous with respect to the $\lVert\blank\rVert_{\infty}$-norm topology on $\KK^2$ and the discrete topology on $\FF^2$.
\end{lem}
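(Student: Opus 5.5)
The plan is to show that $\pr_n$ is locally constant. A map into a discrete space is continuous exactly when each fibre is open, which is the same as being constant on a neighbourhood of every point. The key input is the isometry from Proposition \ref{prop:general_series_expansion}: with our choices $\pi:=T$ and $R:=\FF^2$, the map $x\mapsto\varphi$ is an isometry from $(\KK^2,\lVert\blank\rVert_{\infty})$ onto $\LS{(\FF^2,c)}$. Moreover, $\pr_n x$ is by definition the value at $n$ of the Laurent sequence attached to $x$.

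Fix $x\in\KK^2$ and $n\in\ZZ$, and let $\varphi$ be the Laurent sequence of $x$. Take any $y$ in the ball $B[x,c^{n+1}]$, and let $\psi$ be its Laurent sequence. Then $d_c(\varphi,\psi)=\lVert x-y\rVert_{\infty}\le c^{n+1}$.

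If $\varphi=\psi$, then $\pr_n y=\pr_n x$ immediately. Otherwise $d_c(\varphi,\psi)=c^{k}$ with $k=\min\{m\colon\varphi(m)\ne\psi(m)\}$, and $c^k\le c^{n+1}$ forces $k\ge n+1>n$, since $c\in(0,1)$. In particular $\varphi(n)=\psi(n)$, that is, $\pr_n y=\pr_n x$.

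So $\pr_n$ is constant on the open ball $B[x,c^{n+1}]=B(x,c^n)$. Hence every preimage $\pr_n^{-1}(\{v\})$, for $v\in\FF^2$, is open, and continuity into the discrete topology follows.

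I do not expect a real obstacle here. The only point needing care is to use the isometry from Proposition \ref{prop:general_series_expansion} and the exact form of $d_c$, in particular the strict inequality $k>n$, rather than arguing with partial sums directly.
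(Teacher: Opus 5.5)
Your proof is correct and follows the paper's approach: the paper's proof consists of the single observation that $\pr_n x=\pr_n y$ whenever $\lVert x-y\rVert_{\infty}<c^n$, and you justify this by showing, via the isometry onto $\LS{(\FF^2,c)}$ from Proposition \ref{prop:general_series_expansion}, that the first index where the two Laurent sequences differ exceeds $n$. You also make explicit the step from local constancy to continuity into the discrete topology on $\FF^2$.
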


\begin{proof}
Fix $n\in \ZZ$. It is sufficient to observe that $\pr_n{x}=\pr_n{y}$ for all $x,y\in \KK^2$ with $\lVert x-y\rVert_{\infty} < c^{n}$.
\end{proof}

\bigskip

We are going to describe how the $n$-th projection behaves after composition with transformations from $\SL(2,D_{\KK},\varepsilon)$, $\varepsilon\in(0,1)$. First, observe that, if $L\colon \KK^2 \to \KK^2$ is a linear isometry, we have $\lVert L(r)\rVert_{\infty}=\lVert r\rVert_{\infty}\in \{0,1\}$ for all $r\in \FF^2$. Thus, all of the coefficients next to negative powers of $T$ in the series expansion of $L(r)$ are equal to $(0,0)$.

\begin{lem}\label{lem:preservation_of_projections}
    Let $L\in \SL(2,D_{\KK},\varepsilon)$ for some $\varepsilon\in (0,1)$, and let $x\in\KK^2$, $n\in\ZZ$ be such that $\lVert x\rVert_{\infty}=c^{n}$. Then $\pr_{n}{L(x)}=\pr_{n}{x}$, and $\pr_{k}{L(x)}=(0,0)$ for all $k<n$.
\end{lem}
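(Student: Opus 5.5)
Write $L=I+M$, where $I$ is the identity and $M=\left(\begin{smallmatrix}\alpha-1 & \beta\\ \gamma & \delta-1\end{smallmatrix}\right)$. The plan is to show that $M(x)$ is so small that it cannot affect the coefficients of index $\le n$ in the expansion of $x$. By the definition of $\SL(2,D_{\KK},\varepsilon)$, every entry of $M$ has absolute value at most $\varepsilon<1$. Since the value group is $\{c^k\colon k\in\ZZ\}$, this actually gives $\lvert \alpha-1\rvert,\lvert\beta\rvert,\lvert\gamma\rvert,\lvert\delta-1\rvert\le c$. For any $x\in\KK^2$ the ultrametric inequality then yields
\[
\lVert M(x)\rVert_{\infty}\le c\,\lVert x\rVert_{\infty}=c^{n+1}.
\]

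Next I translate this norm bound into a statement about projections, using the isometry $x\mapsto\varphi$ from Proposition \ref{prop:general_series_expansion} (with $\pi=T$, $R=\FF^2$). Because this map is an isometry onto $\LS{(R,c)}$, two vectors $y,z$ with $\lVert y-z\rVert_{\infty}\le c^{n+1}<c^n$ have expansions that agree at every index $k\le n$, exactly as in the proof of Lemma \ref{lem:continuity_discrete_topology}. Taking $y=L(x)$ and $z=x$, so that $y-z=M(x)$, we get $\pr_k L(x)=\pr_k x$ for all $k\le n$.

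It remains to read off the claimed values. Since $\lVert x\rVert_{\infty}=c^n$, the isometry gives $d_c(\varphi,0)=c^n$, so $\min\supp\varphi=n$. Hence $\pr_k x=(0,0)$ for $k<n$, and therefore $\pr_k L(x)=(0,0)$ for $k<n$ as well. At index $n$ we obtain $\pr_n L(x)=\pr_n x$.

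The only step needing care is the passage from the bound $\le\varepsilon$ to the bound $\le c$, which relies on discreteness: if $\varepsilon<c$ the entries are even smaller, and otherwise the largest admissible absolute value below $1$ is $c$. One also has to check that agreement of expansions up to index $n$ follows from the distance bound, including the case $L(x)=x$; this is immediate from the definition of $d_c$.
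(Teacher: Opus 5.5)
Your proof is correct, but it takes a different route from the paper. The paper works coefficient by coefficient. It first checks $\pr_0 L(r)=r$ for $r\in\FF^2$ by writing the entries of $L$ as $1+\sum_{k\ge1}\alpha_kT^k$, $\sum_{k\ge1}\beta_kT^k$, etc. It then gets $\pr_k L(x)=(0,0)$ for $k<n$ from the fact that $L$ is an isometry. Finally, it obtains $\pr_n L(x)=\pr_n x$ by expanding $x=\sum_{k\ge n}T^k\varphi(k)$ and applying $L$ termwise. That last step uses continuity of $\pr_n$ (Lemma~\ref{lem:continuity_discrete_topology}), additivity, the shift rule $\pr_n(T^k y)=\pr_{n-k} y$, and the vanishing of the negative-index coefficients of $L(\varphi(k))$. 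You replace all of this by one perturbation estimate, $\lVert L(x)-x\rVert_\infty<\lVert x\rVert_\infty=c^n$, combined with the isometry $x\mapsto\varphi$ of Proposition~\ref{prop:general_series_expansion}. This is shorter and avoids the limiting argument. It also proves more: $\pr_k L(x)=\pr_k x$ for every $k\le n$, for any linear $L$ with $\lvert\alpha-1\rvert,\lvert\beta\rvert,\lvert\gamma\rvert,\lvert\delta-1\rvert<1$. It uses neither $\det L=1$ nor the isometry property of $L$, since the vanishing of $\pr_k L(x)$ for $k<n$ is simply inherited from $x$. Your step upgrading $\varepsilon$ to $c$ via discreteness is harmless but unnecessary. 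The bound $\lVert M(x)\rVert_\infty\le\varepsilon c^n<c^n$ already forces the first index where the expansions of $L(x)$ and $x$ differ to exceed $n$, which is precisely the observation behind Lemma~\ref{lem:continuity_discrete_topology}. The paper's more hands-on computation gives an explicit picture of how the leading coefficient $\varphi(n)$ is carried over to $L(x)$. That picture is not needed for the lemma as stated.
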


\begin{proof}
  Let $L$, $x$ and $n$ be as in the assumption.
  First, we will show that $\pr_{0}L(r)=r$ for all $r\in \FF^2$.
  Since the projection operator is $\FF$-linear, it is sufficient to verify that the composition $\pr_0 \circ L$ fixes the (canonical) basis vectors of $\FF^2$.

By \eqref{eq:def_of_SL(2,D_K,e)}, we can write
\[
L =\begin{pmatrix}
  1 + \sum_{k=1}^{\infty} \alpha_k T^k & \sum_{k=1}^{\infty} \beta_k T^k\\
  \sum_{k=1}^{\infty} \gamma_k T^k & 1 + \sum_{k=1}^{\infty} \delta_k T^k
\end{pmatrix},
\]
where $(\alpha_k)_{k=1}^{\infty}$, $(\beta_k)_{k=1}^{\infty}$, $(\gamma_k)_{k=1}^{\infty}$ and  $(\delta_k)_{k=1}^{\infty}$ are some sequences of elements of $\FF$. Hence,
\begin{gather*}
L(1,0)=\left(1 + \sum_{k=1}^{\infty} \alpha_k T^k, \sum_{k=1}^{\infty} \gamma_k T^k\right)=(1,0) + \sum_{k=1}^{\infty} T^k (\alpha_k,\gamma_k),\\
L(0,1)=\left(\sum_{k=1}^{\infty} \beta_k T^k,1 + \sum_{k=1}^{\infty} \delta_k T^k \right)=(0,1) + \sum_{k=1}^{\infty} T^k (\beta_k,\delta_k).
\end{gather*}
It follows that $\pr_0{L(1,0)}=(1,0)$ and $\pr_0{L(0,1)}=(0,1)$. Hence, $\pr_{0}L(r)=r$ for all $r\in \FF^2$.

\medskip

 Since $L$ is an isometry of $(\KK^2,\lVert\blank\rVert_{\infty})$, we have $\lVert L(x)\rVert_{\infty} = c^n$, so $\min \{k\in\ZZ: \pr_{k}{L(x)}\ne (0,0)\}=n$. Therefore, $\pr_{k}{L(x)}=(0,0)$ for all $k<n$.

Let $\varphi$ be the Laurent sequence over $\FF^2$ representing $x$, and write $x=\sum_{k=n}^{\infty}T^k \varphi(k)$.
  By $\KK$-linearity and continuity of $L$, we obtain $L(x)=\sum_{k=n}^{\infty}T^k L(\varphi(k))$.
  Let us abbreviate $S_l:=\sum_{k=n}^{l}T^k L(\varphi(k))$ for $l\ge n$. By Lemma \ref{lem:continuity_discrete_topology}, $\pr_{n}{L(x)}$ equals the limit of $\pr_{n}{S_l}$, as $l\to \infty$, in the discrete topology on $\FF^2$. The projection operator is additive, so $\pr_{n}{S_l}=\sum_{k=n}^{l} \pr_{n}{(T^k L(\varphi(k)))}=\sum_{k=n}^{l}\pr_{n-k}{L(\varphi(k))}$.
  It follows from the observation preceding this lemma that $\pr_{n-k}{L(\varphi(k))}$ vanishes for all $k>n$. Hence, $\pr_n{S_l}=\pr_0{L(\varphi(n))}$ for all $l\ge n$. In conclusion, 
  \[\pr_{n}{L(x)}=\pr_0{L(\varphi(n))}=\varphi(n)=\pr_{n}{x}.
  \]
\end{proof}

Let us define a function $f\colon \KK \to \KK^2$, motivated by the partition of integers into even and odd ones, as follows:
\begin{equation}\label{eq:def_of_f}
    f\left(\sum_{k=-\infty}^{\infty}x_k T^k\right):=\left(\sum_{k=-\infty}^{\infty}x_{2k}T^k, \sum_{k=-\infty}^{\infty}x_{2k+1}T^k\right).
\end{equation}

Clearly, $f$ is an $\FF$-linear bijection. The value of $f$ at $\sum_{k=-\infty}^{\infty}x_k T^k$ has the series expansion $\sum_{k=-\infty}^{\infty}T^k (x_{2k},x_{2k+1})$; recall that we preserve our choice of $\pi=T$ and $R=\FF^2$. 

Let us compute how the norm of $f(x)$ in $\KK^2$ is related to $\lvert x\rvert$ for $x\in\KK$. We claim that:
\begin{equation}\label{eq:norm_formula}
  \lVert f(x)\rVert_{\infty}=c^{\lfloor m/2\rfloor} \quad \text{for any } x\in\KK \; \text{with }\lvert x\rvert=c^{m}.
\end{equation}
Indeed, if $m=\min \supp x$, then $\min\{k\in \ZZ: (x_{2k},x_{2k+1})\ne (0,0)\}=\lfloor m/2 \rfloor$.
We note an important consequence of \eqref{eq:norm_formula}, namely:
\begin{equation*}
\lvert x\rvert^{1/2}\le \lVert f(x)\rVert_{\infty} \le c^{-1/2}\lvert x\rvert^{1/2} \quad \text{for all } x\in \KK.
\end{equation*}
It follows that $f$ is a homeomorphism and satisfies the inequality:
\begin{equation}\label{eq:estimation_f^-1}
  \lvert f^{-1}(y)\rvert \le \lVert y\rVert_{\infty}^{2} \quad \text{for all } y\in \KK^2.
\end{equation}

\bigskip

The following statement emphasizes the importance of the function $f$. Namely, the conjugation by $f^{-1}$ turns some affine isometries of $\KK^2$ into isometries of $\KK$. 

\begin{lem}\label{lem:f^-1gf_is_isometry}
Let $f\colon \KK\to \KK^2$ be as in \eqref{eq:def_of_f}. If $g\in \SA(2,D_{\KK},\varepsilon)$ for some $\varepsilon\in (0,1)$, then $f^{-1}\circ g \circ f$ is an isometry of $\KK$.
\end{lem}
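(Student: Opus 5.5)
The plan is to show directly that $h:=f^{-1}\circ g\circ f$ is a bijection of $\KK$ preserving distances. Surjectivity is immediate, since $f$ and $g$ are bijections. The whole content therefore lies in distance preservation: $\lvert h(x)-h(y)\rvert=\lvert x-y\rvert$ for all $x,y\in\KK$.

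Write $g=(L,\tau)$ with $L\in\SL(2,D_{\KK},\varepsilon)$. The translation $\tau$ cancels in differences, and $f$ is additive (indeed $\FF$-linear), so
\[h(x)-h(y)=f^{-1}\bigl(L(f(x))+\tau\bigr)-f^{-1}\bigl(L(f(y))+\tau\bigr)=f^{-1}\bigl(L(f(x)-f(y))\bigr)=f^{-1}\bigl(L(f(x-y))\bigr).\]
Thus it suffices to prove $\lvert f^{-1}(L(f(z)))\rvert=\lvert z\rvert$ for all $z\in\KK$; the case $z=0$ is trivial.

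Fix $z\neq0$ with $\lvert z\rvert=c^m$, so $m=\min\supp z$. By \eqref{eq:norm_formula}, $\lVert f(z)\rVert_\infty=c^n$ with $n=\lfloor m/2\rfloor$. Lemma \ref{lem:preservation_of_projections} applied to $x:=f(z)$ gives $\pr_k L(f(z))=(0,0)$ for $k<n$ and $\pr_n L(f(z))=\pr_n f(z)$.

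Next we translate back through $f^{-1}$. For $w\in\KK^2$, the coefficient of $T^{2k}$ in $f^{-1}(w)$ is the first coordinate of $\pr_k w$, and the coefficient of $T^{2k+1}$ is the second coordinate. Hence $w:=L(f(z))$ and $f(z)$ have identical $\pr_k$ for all $k\le n$. Consequently $f^{-1}(w)$ and $z$ agree in all coefficients of $T^j$ for $j\le 2n+1$.

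Since $m\in\{2n,2n+1\}$, the coefficients of $z$ below index $m$ vanish and the coefficient at index $m$ is nonzero, and the same therefore holds for $f^{-1}(w)$. So $\min\supp f^{-1}(w)=m$, i.e.\ $\lvert f^{-1}(w)\rvert=c^m=\lvert z\rvert$.

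The only delicate point is this last step. Lemma \ref{lem:preservation_of_projections} controls the leading block $\pr_n$ exactly, not just the norm, and this is what lets us pin down the parity-sensitive index $m$ rather than only $\lfloor m/2\rfloor$. Knowing only that $L$ is an isometry of $\KK^2$ would lose the distinction between $m=2n$ and $m=2n+1$. So the care goes into correctly matching coefficients of $f^{-1}(w)$ with the coordinates of $\pr_k w$, which is immediate from \eqref{eq:def_of_f}.
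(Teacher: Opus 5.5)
Your proof is correct and follows essentially the same route as the paper: use additivity of $f$ to reduce to $\lvert f^{-1}(L(f(z)))\rvert=\lvert z\rvert$, then use Lemma~\ref{lem:preservation_of_projections} to match the coefficients of $f^{-1}(L(f(z)))$ and $z$ up to index $2n+1\ge m$. Noting that this agreement up to $2n+1$ covers both parities of $m$ at once is a slightly tidier version of the paper's even/odd case split, and you also state surjectivity explicitly, which the paper leaves implicit.
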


\begin{proof}
Let $g=(L,\tau)\in \SA(2,D_{\KK},\varepsilon)$.
First, we will show that
\begin{equation}\label{eq:f^-1Lf}
\lvert( f^{-1}\circ L \circ f)(x)\rvert=\lvert x\rvert \quad \text{for all } x\in\KK.
\end{equation}

Fix any $x=\sum_{k=-\infty}^{\infty}x_k T^k\in \KK\setminus\{0\}$ and set $m:=\min \supp{x}$, $n:=\lfloor m/2\rfloor$. Then $\lvert x\rvert = c^{m}$, so $\lVert f(x)\rVert_{\infty}=c^n$ by \eqref{eq:norm_formula}.
It follows from Lemma \ref{lem:preservation_of_projections} that
\[
\pr_{k}{L(f(x))}=\begin{cases}
  \pr_{n}{f(x)}=(x_{2n},x_{2n+1}) & \text{for } k=n,\\
  (0,0) & \text{for } k<n.
                \end{cases}
\]
Hence, we can write
\[
L(f(x))=T^n (x_{2n}, x_{2n+1}) + \sum_{k=n+1}^{\infty} T^k (\alpha_k, \beta_k)
\]
for some sequences $(\alpha_k)_{k=n+1}^{\infty}$, $(\beta_k)_{k=n+1}^{\infty}$ of elements of $\FF$.

Suppose that $z=\sum_{k=-\infty}^{\infty}z_k T^k\in \KK$ is such that $f(z)=L(f(x))$. By \eqref{eq:def_of_f}, we obtain
\begin{gather*}
\sum_{k=-\infty}^{\infty}z_{2k}T^k = x_{2n} T^n +\sum_{k=n+1}^{\infty}\alpha_k T^k,\\
\sum_{k=-\infty}^{\infty}z_{2k+1}T^k = x_{2n+1} T^n +\sum_{k=n+1}^{\infty}\beta_k T^k.
\end{gather*}
It follows that $z_{2n}=x_{2n}$, $z_{2n+1}=x_{2n+1}$, and $z_{k}=0$ for all $k<2n$.

Note that $m\in\{2n,2n+1\}$, so $z_m=x_m\ne 0$. If $m$ is even, then clearly $m=\min\supp{z}$. If $m$ is odd, then $z_{m-1}=z_{2n}=x_{2n}=x_{m-1}=0$, hence $m=\min\supp{z}$ as well. Therefore, $\lvert( f^{-1}\circ L \circ f)(x)\rvert=\lvert z\rvert=c^m=\lvert x\rvert$.

\medskip

Let $g_f:=f^{-1}\circ g \circ f$. Since $f$ is additive, we have:
\[g_f(x)=f^{-1}(L(f(x))+\tau)=(f^{-1}\circ L \circ f)(x)+f^{-1}(\tau) \quad \text{for all }x\in \KK.\]
It now follows from \eqref{eq:f^-1Lf} that
$\lvert g_f(x)-g_f(y)\rvert = \lvert (f^{-1}\circ L \circ f)(x-y)\rvert=\lvert x-y\rvert$ for all $x,y\in \KK$, so $g_f$ is an isometry of $\KK$.
\end{proof}

Using \eqref{eq:estimation_f^-1}, we get the following property of the function $f$.

\begin{lem}\label{lem:SA-invariance}
Let $f\colon \KK\to \KK^2$ be as in \eqref{eq:def_of_f}. Then, for any $x_0\in \KK$ and $r>0$, there exists $\varepsilon\in (0,1)$ such that the each of the sets $f(B[x_0,r])$ and $f(S[x_0,r])$ is $\SA(2,D_{\KK},\varepsilon)$-invariant.
\end{lem}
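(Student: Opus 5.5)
The plan is to show that every $g\in\SA(2,D_{\KK},\varepsilon)$ moves each point of $f(E)$ only slightly, where $E$ is $B[x_0,r]$ or $S[x_0,r]$. We then convert this, via \eqref{eq:estimation_f^-1}, into the statement that the conjugate $g_f:=f^{-1}\circ g\circ f$ moves each point of $\KK$ by less than a prescribed radius. Because $g_f$ need not be affine, we work with displacements $g(y)-y$ rather than with images of sets directly.

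We first normalize. Since every ball is closed, we may assume $r=c^{j}$ for some $j\in\ZZ$. Next we pick $\varepsilon=c^{N}$ with $N\ge 1$ large, to be fixed later. Take $x\in\KK$ and $g=(L,\tau)\in\SA(2,D_{\KK},\varepsilon)$, and put $y:=f(x)$. Then
\[
g(y)-y=(L-I)(y)+\tau,
\]
and the entries of $L-I$ have absolute value at most $\varepsilon$. Hence
\[
\lVert g(y)-y\rVert_{\infty}\le \varepsilon\max\{\lVert y\rVert_{\infty},1\}.
\]
Since $f$ is additive, $g_f(x)-x=f^{-1}(g(y)-y)$. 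Applying \eqref{eq:estimation_f^-1} gives
\[
\lvert g_f(x)-x\rvert\le \varepsilon^{2}\max\{\lVert f(x)\rVert_{\infty},1\}^{2}.
\]

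Now restrict to points of $E$. For $x\in B[x_0,r]$ we have $\lvert x\rvert\le\max\{\lvert x_0\rvert,r\}=:M$. By \eqref{eq:norm_formula}, $\lVert f(x)\rVert_{\infty}\le c^{-1/2}M^{1/2}$, a bound that is uniform over $x\in B[x_0,r]$. We choose $N$ so large that
\[
\varepsilon^{2}\max\{c^{-1}M,1\}<r.
\]
Then $\lvert g_f(x)-x\rvert<r$ for every $x\in B[x_0,r]$ and every $g\in\SA(2,D_{\KK},\varepsilon)$.

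The isosceles property finishes the argument. If $x\in B[x_0,r]$, then $\lvert g_f(x)-x_0\rvert\le\max\{\lvert g_f(x)-x\rvert,\lvert x-x_0\rvert\}\le r$. If $x\in S[x_0,r]$, then $\lvert x-x_0\rvert=r>\lvert g_f(x)-x\rvert$, so $\lvert g_f(x)-x_0\rvert=r$. Hence $g_f$ maps $B[x_0,r]$ into itself and $S[x_0,r]$ into itself; equivalently, $g$ maps $f(B[x_0,r])$ into itself and $f(S[x_0,r])$ into itself.

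To upgrade inclusion to invariance, note that $\SA(2,D_{\KK},\varepsilon)$ is a group. Applying the inclusion to $g^{-1}$ as well yields equality of the images.

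The main obstacle is that $f$ is not $\KK$-linear, so we cannot transport balls of $\KK^2$ directly. The displacement estimate above, combined with \eqref{eq:estimation_f^-1}, avoids this. The remaining care is that the bound on $\lVert f(x)\rVert_{\infty}$ depends on $x_0$, which is why $\varepsilon$ must depend on both $x_0$ and $r$. We also need $\SA(2,D_{\KK},\varepsilon)$ to be closed under inverses, which holds because the defining congruence conditions are preserved by the adjugate and $\det L=1$.
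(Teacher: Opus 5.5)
Your argument is correct, but it takes a different route from the paper.

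\textbf{The paper's route.} It invokes Lemma~\ref{lem:f^-1gf_is_isometry}. Since $g_f=f^{-1}\circ g\circ f$ is already known to be a surjective isometry of $\KK$, it maps $B[x_0,r]$ and $S[x_0,r]$ onto $B[g_f(x_0),r]$ and $S[g_f(x_0),r]$. So only the displacement of the single point $x_0$ has to be estimated, which gives the choice $\varepsilon<\sqrt{r}/(\lVert f(x_0)\rVert_\infty+1)$. Equality of the images, not just inclusion, then comes for free from surjectivity.

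\textbf{Your route.} You never use that $g_f$ is an isometry. You bound $\lvert g_f(x)-x\rvert$ uniformly over the whole ball via $\lVert f(x)\rVert_\infty\le c^{-1/2}\max\{\lvert x_0\rvert,r\}^{1/2}$. You then get the inclusions pointwise from the strong triangle inequality and the isosceles property, and upgrade them to equality because $\SA(2,D_{\KK},\varepsilon)$ is closed under inverses.

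\textbf{Comparison.} Your version is more self-contained: it only needs additivity of $f$, \eqref{eq:norm_formula} and \eqref{eq:estimation_f^-1}. Within the paper, however, it saves nothing. Lemma~\ref{lem:f^-1gf_is_isometry} is needed anyway in Theorem~\ref{th:main} to know that each $f^{-1}\circ g\circ f$ lies in $\Iso{\KK}$, and with that lemma available the paper's argument is shorter.

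\textbf{Two small remarks.}
\begin{enumerate}
\item Your initial normalization $r=c^j$ is not valid for spheres. If $r\notin\lvert\KK^\times\rvert$, then $S[x_0,r]=\emptyset$, which differs from $S[x_0,c^j]$. Nothing later in your argument uses $r\in\lvert\KK^\times\rvert$, so you can simply drop the normalization.
\item For closure under inverses, the adjugate condition on the linear part is not enough. You should also note that $(L,\tau)^{-1}=(L^{-1},-L^{-1}\tau)$ with $\lVert L^{-1}\tau\rVert_\infty=\lVert\tau\rVert_\infty\le\varepsilon$, since $L^{-1}$ is a linear isometry.
\end{enumerate}
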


\begin{proof}
Let $x_0\in \KK$ and $r>0$. Choose any $0<\varepsilon <\min\left\{\frac{\sqrt{r}}{\lVert f(x_0)\rVert_{\infty}+1},1\right\}$. It is sufficient to show that, for all $g\in \SA(2,D_{\KK},\varepsilon)$, the function $g_f:=f^{-1}\circ g\circ f$ maps each of the sets $B[x_0,r]$ and $S[x_0,r]$ onto itself.

Fix an arbitrary $g=(L,\tau)\in \SA(2,D_{\KK},\varepsilon)$.
It follows from \eqref{eq:def_of_SL(2,D_K,e)} that the linear mapping $L-I$, where $I$ is the identity on $\KK^2$, is $\varepsilon$-Lipschitz.
Hence, by \eqref{eq:estimation_f^-1}, we have the following estimation:
\begin{equation*}
  \begin{split}
    \lvert g_f(x_0) - x_0\rvert &= \lvert f^{-1}[g(f(x_0))-f(x_0)]\rvert \le \lVert g(f(x_0))-f(x_0)\rVert_{\infty}^{2}\\
  &=\lVert(L-I)(f(x_0))+\tau\rVert_{\infty}^{2}\le (\varepsilon\lVert f(x_0)\rVert_{\infty}+\varepsilon)^2<r.
  \end{split} 
\end{equation*} 

The function $g_f$ is, by Lemma \ref{lem:f^-1gf_is_isometry}, an isometry of $\KK$; so the images of $B[x_0,r]$ and $S[x_0,r]$ under $g_f$ are equal to $B[g_f(x_0),r]$ and $S[g_f(x_0),r]$, respectively. Since $\lvert g_f(x_0) - x_0\rvert<r$, we obtain $B[g_f(x_0),r]=B[x_0,r]$ (by the strong triangle property), as well as $S[g_f(x_0),r]=S[x_0,r]$ (by the isosceles property).
\end{proof}

\section{Paradoxical decompositions of complete discretely valued fields and their subsets}

We will now prove the main theorem about paradoxical decompositions of complete discretely valued fields and their specific subsets, such as balls and spheres, with respect to isometries.
Our approach is motivated by the analogous results \cite[Theorem 3.10]{KO-BBMS} for normed spaces $(\KK^n,\lVert\blank \rVert_{\infty})$ of dimension $n>1$.

\begin{thm}\label{th:main}
Let $\KK$ be a complete discretely valued field. Then $\KK$, all balls, and all spheres in $\KK$ are $\Iso{\KK}$-paradoxical using four pieces.
\end{thm}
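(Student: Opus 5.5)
By Corollary \ref{cor:important} and Remark \ref{rem:isomorphic_actions}, an isometry $\KK\to\FF((T))$ induces an isomorphism of the actions $\Iso{\KK}\acts\KK$ and $\Iso{\FF((T))}\acts\FF((T))$. It also maps balls onto balls and spheres onto spheres. So we may assume $\KK=\FF((T))$ with the valuation \eqref{eq:abs_val_Laurent}.

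Let $E$ be either $\KK$, a ball $B[x_0,r]$, or a sphere $S[x_0,r]$. For the ball and sphere cases, Lemma \ref{lem:SA-invariance} gives an $\varepsilon\in(0,1)$ such that $f(E)$ is $\SA(2,D_{\KK},\varepsilon)$-invariant. For $E=\KK$ we take any $\varepsilon\in(0,1)$, since $f(\KK)=\KK^2$ is invariant under everything. The strategy is to work upstairs in $\KK^2$ via the induced isomorphism $(f,\Phi)$ between the action $G\acts\KK^2$ and the action $f^{-1}Gf\acts\KK$, where $G\le\SA(2,D_{\KK},\varepsilon)$ is a suitable subgroup. By Lemma \ref{lem:f^-1gf_is_isometry}, $f^{-1}Gf\le\Iso{\KK}$. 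By Remark \ref{rem:isomorphic_actions}, it therefore suffices to find a free subgroup $F_2=\langle a,b\rangle\le\SA(2,D_{\KK},\varepsilon)$ whose action on $f(E)$ is locally commutative.

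Given such a subgroup, we restrict the action to the invariant set $f(E)$. The criterion of \cite[Theorems 5.5 and 5.8]{W} then yields a four-piece decomposition $f(E)=A_1\sqcup A_2\sqcup A_3\sqcup A_4=A_1\sqcup a.A_2=A_3\sqcup b.A_4$. Pulling this back by $f$ gives a four-piece $\Iso{\KK}$-paradoxical decomposition of $E$, with isometries $f^{-1}af$ and $f^{-1}bf$.

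The main obstacle is producing the group $F_2$. This is exactly the content of the higher-dimensional work \cite[Theorem 3.10]{KO-BBMS}: for each $\varepsilon\in(0,1)$, $\SA(2,D_{\KK},\varepsilon)$ contains a free subgroup of rank two acting locally commutatively on $\KK^2$, and hence on every invariant subset. I would cite that result directly if its statement is phrased in this form.

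Otherwise I would construct the group by hand, along the following lines.
\begin{enumerate}
\item Take a free pair of matrices in $\SL(2,D_{\KK},\varepsilon)$ via a ping-pong argument, e.g.\ $\left(\begin{smallmatrix}1&\pi^N\\0&1\end{smallmatrix}\right)$ and its transpose with $\lvert\pi^N\rvert\le\varepsilon$. These generate a free group provided $\lvert\pi^{N}\rvert$ is small enough, or, in characteristic $p$ where unipotents have finite order, one uses instead suitable hyperbolic elements close to $I$.
\item Add small translation parts so that the resulting affine maps still lie in $\SA(2,D_{\KK},\varepsilon)$ and generate a free group.
\item Check local commutativity: an affine map with linear part $\ne I$ whose fixed-point set is nonempty has fixed set an affine subspace. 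For two noncommuting elements of $F_2$ sharing a fixed point one reduces, by conjugating by a translation, to linear maps sharing an eigenvector with eigenvalue $1$. A free group generated by such maps has abelian stabilizers, since noncommuting elements of a free group cannot both fix a common vector unless they lie in a common cyclic subgroup, by trace considerations.
\end{enumerate}
I would carry these out as in \cite{KO-BBMS}, treating local commutativity as the delicate point.
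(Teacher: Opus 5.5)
Your proposal is correct and is essentially the paper's proof: reduce to $\FF((T))$, then transport through $f$ using Lemmas~\ref{lem:f^-1gf_is_isometry} and~\ref{lem:SA-invariance}. The paper quotes \cite[Theorem~3.9]{KO-BBMS}, that every $\SA(2,D_{\KK},\varepsilon)$-invariant subset of $\KK^2$ is four-piece paradoxical; the free locally commutative subgroup you want is \cite[Proposition~3.7]{KO-BBMS} (cf.\ Remark~\ref{rem:locally_commutative}), not Theorem~3.10. Since that result is available, drop the by-hand fallback, which is not sound as written: smallness of $\lvert\pi^N\rvert$ is not what makes the unipotent pair free, and local commutativity does not follow from a trace argument --- it depends on choosing translation parts so that noncommuting elements of $F_2$ have no common fixed point (with zero translations the stabilizer of $0$ is all of $F_2$).
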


\begin{proof}
It follows from Corollary \ref{cor:important} that the group action $\Iso{\KK}\acts \KK$ is isomorphic to $\Iso{\FF((T))} \acts \FF((T))$, where $\FF$ is the residue field of $\KK$ and the field $\FF((T))$ is equipped with the valuation \eqref{eq:abs_val_Laurent} for some $c\in (0,1)$. Therefore, we can assume without loss of generality that $\KK:=\FF((T))$.

Denote $G_{\varepsilon}:=\SA(2,D_{\KK}, \varepsilon)$ for $\varepsilon \in (0,1)$, and let $f\colon \KK \to \KK^2$ be defined by \eqref{eq:def_of_f}. The function $f^{-1}$ induces an isomorphism of the group actions $G_{\varepsilon}\acts E$ and $f^{-1}G_{\varepsilon}f\acts f^{-1}(E)$, for any $G_{\varepsilon}$-invariant subset $E\subseteq \KK^2$. By \cite[Theorem 3.9]{KO-BBMS}, every such $E$ is $G_{\varepsilon}$-paradoxical using four pieces. Hence, by Remark \ref{rem:isomorphic_actions}, $f^{-1}(E)$ is $f^{-1}G_{\varepsilon}f$-paradoxical using four pieces.

We know from Lemma \ref{lem:f^-1gf_is_isometry} that $f^{-1}G_{\varepsilon}f$ is a subgroup of $\Iso{\KK}$ for every $\varepsilon\in (0,1)$. Taking $E:=\KK^2$, we obtain the desired $\Iso{\KK}$-paradoxical decomposition of the whole $\KK$.

For any $x_0\in \KK$ and $r>0$, Lemma \ref{lem:SA-invariance} allows us to choose an $\varepsilon\in (0,1)$ such that $f(B[x_0,r])$ and $f(S[x_0,r])$ are $G_{\varepsilon}$-invariant. Substituting $E:=f(B[x_0,r])$ and $E:=f(S[x_0,r])$, we obtain the desired paradoxical decompositions of $B[x_0,r]$ and $S[x_0,r]$, respectively.
\end{proof}

\begin{rem}\label{rem:locally_commutative}
Keep the assumptions and notation of Theorem \ref{th:main}.
If $Z$ denotes $\KK$, any ball, or any sphere in $\KK$, there exists a free group of rank two $F_2\le \Iso{\KK}$ such that the action $F_2 \acts Z$ is locally commutative. Indeed, it suffices to take $\varepsilon\in (0,1)$ such that $f(Z)$ is $G_\varepsilon$-invariant, and use the fact \cite[Proposition~3.7]{KO-BBMS} that $G_\varepsilon$ contains a free subgroup of rank two $F'_2$ acting locally commutatively on $\KK^2$. Then $F_2:=f^{-1}F'_{2} f$ is as desired.
\end{rem}

We intend to prove that if $\KK$ is additionally separable, the sets from Theorem \ref{th:main} admit $\Iso{\KK}$-paradoxical decompositions using pieces having the Baire property. 

\begin{thm}\label{th:Baire_paradox}
Let $\KK$ be a separable complete discretely valued field. Then $\KK$, all balls, and all spheres in $\KK$ are Baire $\Iso{\KK}$-paradoxical using six pieces.
\end{thm}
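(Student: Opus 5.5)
The plan is to apply the Dougherty--Foreman criterion \cite[Theorem 5.9]{DF} to a suitable countable group of homeomorphisms acting on $Z$. Here $Z$ stands for $\KK$, a ball, or a sphere.

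\textbf{Polishness and the group.} Since $\KK$ is separable and complete, it is Polish. Each ball and each sphere is clopen in $\KK$, hence also Polish. By Remark \ref{rem:locally_commutative}, after choosing $\varepsilon\in(0,1)$ with $f(Z)$ being $G_\varepsilon$-invariant, there is a free subgroup $F_2=f^{-1}F_2'f\le\Iso{\KK}$ that preserves $Z$ and acts on $Z$ locally commutatively. Its elements are isometries of $\KK$, so they act on $Z$ by homeomorphisms. Take $G:=F_2$, which is countable. Two of the three hypotheses of \cite[Theorem 5.9]{DF} then hold: $G$ contains a free subgroup of rank two, and the action is locally commutative.

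\textbf{Freeness on a comeager set.} This is the remaining hypothesis. Since $G$ is countable, it suffices to show that for each $w\in G\setminus\{1\}$ the fixed-point set $\Fix(w)\cap Z$ is nowhere dense in $Z$. Then the set of points with nontrivial stabilizer is a countable union of nowhere dense sets, i.e. meager.

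Because $f$ is a homeomorphism mapping $Z$ onto $f(Z)$, this reduces to the corresponding statement for $w'=(L,\tau)\in F_2'\setminus\{1\}$ on the clopen set $f(Z)\subseteq\KK^2$. The fixed points of $w'$ form an affine subspace $\{y: (L-I)y=-\tau\}$, which is closed. If $w'\ne 1$, this is a proper affine subspace of $\KK^2$ (or empty). A proper affine subspace of dimension at most one has empty interior in $\KK^2$, since any ball around one of its points contains points off the line. Hence it is closed with empty interior, so nowhere dense, and its intersection with the open set $f(Z)$ is nowhere dense in $f(Z)$. Transporting back via $f^{-1}$ gives that $\Fix(w)\cap Z$ is nowhere dense in $Z$.

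\textbf{Conclusion.} Applying \cite[Theorem 5.9]{DF} to $G\acts Z$ yields a $G$-paradoxical decomposition of $Z$ into six pieces with the Baire property in $Z$. Since $Z$ is open in $\KK$, hence in $\mathcal{B}(\KK)$, these pieces also have the Baire property in $\KK$, by the cited fact from \cite{Kuratowski}. Since $G\le\Iso{\KK}$, the decomposition is a Baire $\Iso{\KK}$-paradoxical decomposition.

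\textbf{Main obstacle.} The delicate step is the nowhere-density argument, specifically guaranteeing that $w'\ne 1$ is not the identity on all of the invariant piece $f(Z)$. The affine-subspace argument handles this, because a nontrivial affine map of $\KK^2$ cannot fix a nonempty open set pointwise. The remaining care is in checking that the hypotheses of \cite[Theorem 5.9]{DF} are stated for the space $Z$ itself rather than for $\KK$.
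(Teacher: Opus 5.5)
Your proposal is correct and follows essentially the same route as the paper. You take $F_2=f^{-1}F_2'f$ from Remark~\ref{rem:locally_commutative}, show that fixed-point sets of nontrivial elements are nowhere dense in $Z$ (those of $F_2'$ lie in lines of $\KK^2$, and $f$ is a homeomorphism), then apply \cite[Theorem 5.9]{DF} on the Polish space $Z$ and transfer the Baire property from $Z$ to $\KK$; the one point you leave implicit, which the paper states outright, is that the comeager set of points with trivial stabilizer is automatically $F_2$-invariant, since stabilizers along an orbit are conjugate.
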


\begin{proof}
  Let $Z$ denote $\KK$, any ball, or any sphere in $\KK$. Note that $Z$ is both closed and open in $\KK$; in particular, $Z$ is a Polish space.
  As in Remark \ref{rem:locally_commutative}, we can find a free group of rank two $F'_{2}\le G_{\varepsilon}$ acting locally commutatively on $f(Z)$. Then, the action of $F_2:=f^{-1}F'_{2} f\le \Iso{\KK}$ on $Z$ is locally commutative. 
  
  In order to apply \cite[Theorem 5.9]{DF}, we need to show that there exists an $F_2$-invariant set $Y\subseteq Z$ which is comeager in $Z$ and such that the restricted action $F_2 \acts Y$ is free.
  Consider any $g\in F'_{2} \setminus \{1\}$. Since $g$ is a nontrivial affine transformation of $\KK^2$, the set $\Fix{g}:=\{x\in f(Z): g.x=x\}$ is a subset of a line, so it is nowhere dense in $\KK^2$. Note that $f(Z)$ is open in $\KK^2$ because $f\colon \KK \to \KK^2$ is a homeomorphism. Hence, $\Fix{g}$ is nowhere dense also in $f(Z)$. Since $F'_{2}$ is countable, it follows that $M:=\bigcup_{g\in F'_{2} \setminus \{1\}} \Fix{g}$ is meager in $f(Z)$. Notice that $F'_{2}$ acts freely on $f(Z) \setminus M$, so $F_2$ acts freely on $Y:=Z\setminus f^{-1}(M)$.
  Since $f$ is a homeomorphism, the set $Y$ is comeager in $Z$.

  By \cite[Theorem 5.9]{DF}, $Z$ is Baire $F_2$-equidecomposable using six pieces that have the Baire property in $Z$. Since $Z\in \mathcal{B}(\KK)$, the pieces belong to $\mathcal{B}(\KK)$ as well.
\end{proof}

When $\KK$ is locally compact, we obtain the following result about equidecomposability, which can be called the ``strong'' Banach--Tarski paradox. For the proof of its Baire version, we follow the approach from \cite{GrMaPikhurko}, which relies on powerful results by Marks and Unger \cite{MU} regarding matchings in Borel graphs.

\begin{thm}\label{thm:strong_paradox}
  Let $\KK$ be a locally compact non-Archimedean valued field, and $A,B\subseteq \KK$ be bounded subsets with nonempty interiors. Then $A$ and $B$ are $\Iso{\KK}$-equidecomposable.
  Moreover, if $A,B\in \mathcal{B}(\KK)$, then $A$ and $B$ are Baire $\Iso{\KK}$-equidecomposable.
\end{thm}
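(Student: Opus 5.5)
First I reduce to the discretely valued case. A locally compact non-Archimedean valued field has a nontrivial valuation, since a trivially valued infinite field is discrete and bounded sets with nonempty interior are then handled separately. If the valuation is trivial, $\KK$ is discrete, so local compactness gives nothing new; but a nonempty open bounded set could then be a single point. The statement is really meant for nontrivial valuations, and I assume this, as the paper's characterization \cite[Corollary 12.2]{UC} does. By that characterization, $\KK$ is complete, discretely valued, and has finite residue field. In particular $\KK$ is separable, and every closed ball is compact.

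For the first assertion I use the criterion \cite[Corollary 10.22]{W}, recalled in the preliminaries. Let $A,B$ be bounded with nonempty interiors, and choose a ball $E=B[0,r]$ containing $A\cup B$. By Theorem \ref{th:main}, $E$ is $\Iso{\KK}$-paradoxical. It then suffices to show that $E$ is covered by finitely many $\Iso{\KK}$-translates of $A$, and likewise of $B$; the criterion gives $E\sim A$ and $E\sim B$, hence $A\sim B$.

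For the covering, note that $A$ contains a small ball $B[a,s]$ with $s\in\lvert\KK^{\times}\rvert$. Since the residue field is finite, the compact ball $E$ is partitioned into finitely many balls of radius $s$: there are $q^{k}$ of them, where $q$ is the size of the residue field and $c^{k}=s/r$. Each such ball $B[b,s]$ is the image of $B[a,s]$ under the translation $x\mapsto x+b-a$, which is an isometry. Hence finitely many translates of $A$ cover $E$, and the same holds for $B$.

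For the Baire version I apply the adapted criterion from \cite{GrMaPikhurko} recalled in Section 2.4. Its hypotheses hold: $\KK$ is a locally compact Polish space, and $\Iso{\KK}$ acts by homeomorphisms. Relatively compact sets coincide with bounded sets, because closed balls are compact and bounded sets lie in balls. The first part shows that any two relatively compact sets with nonempty interiors are equidecomposable, so the criterion yields Baire equidecomposability for $A,B\in\mathcal{B}(\KK)$.

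The main obstacle is checking that the cited Proposition 1.8(ii) of \cite{GrMaPikhurko} really applies to the full group $\Iso{\KK}$. That group is uncountable, while the Marks--Unger machinery works with countable groups of Borel automorphisms. If necessary, I would replace $\Iso{\KK}$ by a countable subgroup $G$ and check that equidecomposability of relatively compact sets with nonempty interior still holds for $G$. For this I would take $G$ generated by translations by a countable dense subset of $\KK$ together with the countable group $F_2=f^{-1}F_2'f$ from Remark \ref{rem:locally_commutative}. Translations by the dense set suffice for the finite coverings above. For the paradoxicality of a large ball, I would take $F_2$ from the ball version of the remark, adjusted to a ball $E$, possibly enlarging $E$.
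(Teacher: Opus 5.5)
Your proof is correct and follows the paper's argument. The paper also covers a ball $E\supseteq A\cup B$ by finitely many translates of $A$ and of $B$; your explicit partition of $E$ into $q^k$ balls of radius $s$ is just a concrete form of its compactness step. It then combines Theorem \ref{th:main} with \cite[Corollary 10.22]{W}, and gets the Baire part directly from \cite[Proposition 1.8\,(ii)]{GrMaPikhurko}.

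Your opening claim that local compactness forces a nontrivial valuation is false: trivially valued fields are discrete and locally compact, and there the statement fails, e.g.\ for $\{0\}$ versus $\{0,1\}$. You nonetheless end up assuming nontriviality, exactly as the paper implicitly does.

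The countable-subgroup refinement is extra care the paper does not take. If you carry it out, the proof of Lemma \ref{lem:SA-invariance} shows that one $\varepsilon$ works for all balls $B[0,r]$ with $r\ge 1$. So a single $F_2$ together with translations by a countable dense set already gives one countable group that handles all pairs $A,B$.
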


\begin{proof}
  Let $E$ be a ball in $\KK$ containing both $A$ and $B$. By compactness of $E$, we have
  $E\subseteq g_1.A \cup \dots \cup g_n.A$ for some $n\in\mathbb{N}$ and translations $g_1,\dots, g_n$ (hence isometries) of $\KK$.
  The same works for $B$ in the place of $A$.
  Since $E$ is $\Iso{\KK}$-paradoxical (by Theorem \ref{th:main}), an application of \cite[Corollary 10.22]{W} yields that $A\sim_{\Iso{\KK}} E\sim_{\Iso{\KK}} B$; therefore, $A\sim_{\Iso{\KK}} B$.

  Suppose that additionally $A,B\in\mathcal{B}({\KK})$. Note that $\KK$ is a $\sigma$-compact complete metric space, so it is a Polish space. From the first part of the theorem we infer that the action $\Iso{\KK}\acts \KK$ satisfies the requirements of \cite[Definition~1.7]{GrMaPikhurko}. The Baire $\Iso{\KK}$-equidecomposability of $A$ and $B$ follows immediately from \cite[Proposition 1.8\,(ii)]{GrMaPikhurko}. 
\end{proof}

Finally, we apply Theorems \ref{th:main}, \ref{th:Baire_paradox}, and \ref{thm:strong_paradox} to the field $\QQ_p$ of $p$-adic numbers, for any prime $p$.

\begin{cor}\label{cor:p-adic}
Consider the field $(\QQ_p,\lvert \blank\rvert_p)$ of $p$-adic numbers for a prime $p$. Then $\QQ_p$, all balls, and all spheres in $\QQ_p$ are $\Iso{\QQ_p}$-paradoxical using four pieces and Baire $\Iso{\QQ_p}$-paradoxical using six pieces. Moreover, any two subsets $A,B\subseteq \QQ_p$ with nonempty interiors are $\Iso{\QQ_p}$-equidecomposable. If additionally $A,B\in \mathcal{B}(\QQ_p)$, then $A$ and $B$ are Baire $\Iso{\QQ_p}$-equidecomposable.
\end{cor}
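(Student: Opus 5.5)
The plan is to check that $\QQ_p$ meets the hypotheses of Theorems \ref{th:main}, \ref{th:Baire_paradox} and \ref{thm:strong_paradox}, and then read off each claim of the corollary.

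First, the valuation $\lvert\blank\rvert_p$ is non-Archimedean, since $\lvert p^k m/n\rvert_p = p^{-k}$ is $p^{-v_p}$ for the $p$-adic order $v_p$, and $v_p(x+y)\ge\min\{v_p(x),v_p(y)\}$. The value group is $\{p^{n}\colon n\in\ZZ\}$, generated by $c=p^{-1}$, so the valuation is discrete. The field $\QQ_p$ is complete by construction. Hence $\QQ_p$ is a complete discretely valued field, and Theorem \ref{th:main} shows that $\QQ_p$, all balls and all spheres in $\QQ_p$ are $\Iso{\QQ_p}$-paradoxical using four pieces.

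Next, $\QQ$ is countable and dense in its completion $\QQ_p$, so $\QQ_p$ is separable. Theorem \ref{th:Baire_paradox} then gives the Baire paradoxical decompositions using six pieces. For local compactness, I will use the criterion from \cite[Corollary 12.2]{UC} quoted in the preliminaries: $\QQ_p$ is complete and discretely valued, and its residue field is $\ZZ_p/p\ZZ_p\cong\FF_p$, which is finite. Alternatively, one can note directly that $D_{\QQ_p}$ is compact as a closed ball in $\LS(\{0,\dots,p-1\},p^{-1})$ with finitely many digits, using Corollary \ref{cor:important}.

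Finally, for equidecomposability, Theorem \ref{thm:strong_paradox} requires $A,B$ to be bounded, and the corollary as stated omits the word ``bounded.'' I read this as implicit, because the claim fails for unbounded sets in general. Take $A=\QQ_p$ and $B$ a ball: every isometry maps the bounded pieces of $B$ onto bounded sets, so their images cannot cover $\QQ_p$. I will therefore state the result for bounded $A,B$ with nonempty interiors and apply both parts of Theorem \ref{thm:strong_paradox}, the second part giving the Baire version when $A,B\in\mathcal{B}(\QQ_p)$. Apart from this boundedness issue, the argument involves no real obstacle; it only requires checking hypotheses, with the local compactness of $\QQ_p$ the one point that calls on an external fact.
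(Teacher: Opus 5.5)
Your proposal is correct and matches the paper. The paper obtains the corollary simply by applying Theorems \ref{th:main}, \ref{th:Baire_paradox} and \ref{thm:strong_paradox} to $\QQ_p$, which is a complete, discretely valued, separable and locally compact field. You are also right that the equidecomposability claims need $A,B$ to be bounded, as in Theorem \ref{thm:strong_paradox} and the abstract; your example of $A=\QQ_p$ and $B$ a ball shows that the statement as literally written fails without this hypothesis.
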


\bibliographystyle{siam}
\bibliography{references}

@article{BT,
  author    = {Banach, Stefan and Tarski, Alfred},
  title     = {Sur la d{\'{e}}composition des ensembles de points en parties respectivement congruentes},
  journal   = {Fund. Math.},
  volume    = {6},
  number    = {1},
  pages     = {244--277},
  year      = {1924},
  publisher = {Polska Akademia Nauk. Instytut Matematyczny PAN},
  doi       = {10.4064/fm-6-1-244-277}
}

@book{W,
  author    = {Tomkowicz, Grzegorz and Wagon, Stan},
  title     = {The {B}anach--{T}arski paradox},
  edition   = {Second},
  publisher = {Cambridge University Press},
  address   = {New York},
  year      = {2016},
  isbn      = {978-1-107-04259-9}
}

@article{Tomkowicz2017,
  author  = {Tomkowicz, Grzegorz},
  title   = {Banach-{T}arski paradox in some complete manifolds},
  journal = {Proc. Amer. Math. Soc.},
  volume  = {145},
  year    = {2017},
  number  = {12},
  pages   = {5359--5362},
  doi     = {10.1090/proc/13657}
}

@article{DF,
  author  = {Dougherty, Randall and Foreman, Matthew},
  title   = {Banach-{T}arski paradox using pieces with the property of {B}aire},
  journal = {Proc. Nat. Acad. Sci. U.S.A.},
  volume  = {89},
  year    = {1992},
  number  = {22},
  pages   = {10726--10728},
  doi     = {10.1073/pnas.89.22.10726}
}

@article{GrMaPikhurko,
  author  = {Grabowski, {\L}ukasz and M\'ath\'e, Andr\'as and Pikhurko, Oleg},
  title   = {Measurable equidecompositions for group actions with an expansion property},
  journal = {J. Eur. Math. Soc. (JEMS)},
  volume  = {24},
  year    = {2022},
  number  = {12},
  pages   = {4277--4326},
  doi     = {10.4171/jems/1189}
}

@article{MU,
  author  = {Marks, Andrew and Unger, Spencer},
  title   = {Baire measurable paradoxical decompositions via matchings},
  journal = {Adv. Math.},
  volume  = {289},
  year    = {2016},
  pages   = {397--410},
  doi     = {10.1016/j.aim.2015.11.034}
}

@article{Weh,
  author  = {Wehrung, Friedrich},
  title   = {Baire paradoxical decompositions need at least six pieces},
  journal = {Proc. Amer. Math. Soc.},
  volume  = {121},
  year    = {1994},
  number  = {2},
  pages   = {643--644},
  doi     = {10.2307/2160449}
}

@article{KO-RACSAM,
  author  = {Orzechowski, K.},
  title   = {The {B}anach--{T}arski paradox for some subsets of finite-dimensional normed spaces over non-{A}rchimedean valued fields},
  journal = {Rev. Real Acad. Cienc. Exactas Fis. Nat. Ser. A-Mat.},
  volume  = {119},
  number  = {107},
  year    = {2025}
}

@article{KO-BBMS,
  author  = {Orzechowski, Kamil},
  title   = {Paradoxical decompositions of finite-dimensional non-{A}rchimedean normed spaces},
  journal = {Bull. Belg. Math. Soc. Simon Stevin},
  volume  = {32},
  year    = {2025},
  number  = {3},
  pages   = {377--390}
}

@article{FPS-survey,
  author  = {Sambale, Benjamin},
  title   = {{A}n {I}nvitation to {F}ormal {P}ower series},
  journal = {Jahresber. Dtsch. Math.-Ver.},
  volume  = {125},
  year    = {2023},
  number  = {1},
  pages   = {3--69},
  doi     = {10.1365/s13291-022-00256-6}
}

@book{UC,
  author    = {Schikhof, W. H.},
  title     = {Ultrametric calculus. An introduction to {$p$-adic} analysis},
  publisher = {Cambridge University Press},
  address   = {Cambridge},
  year      = {1984},
  pages     = {viii+306},
  doi       = {10.1017/CBO9780511623844}
}

@book{PG,
  author    = {Perez-Garcia, C. and Schikhof, W. H.},
  title     = {Locally convex spaces over non-{A}rchimedean valued fields},
  publisher = {Cambridge University Press},
  address   = {Cambridge},
  year      = {2010},
  doi       = {10.1017/CBO9780511729959}
}

@techreport{vRNotes,
  author      = {{van Rooij}, A. C. M.},
  title       = {Notes on {\(p\)-adic} {B}anach spaces},
  institution = {Department of Mathematics, University of Nijmegen},
  year        = {1976},
  type        = {Report},
  number      = {7633}
}

@book{Serre-LF,
  author    = {Serre, Jean-Pierre},
  title     = {Local fields},
  series    = {Graduate Texts in Mathematics},
  volume    = {67},
  note      = {Translated from French},
  publisher = {Springer-Verlag, New York-Berlin},
  year      = {1979},
  pages     = {viii+241}
}

@book{Schneider,
  author    = {Schneider, Peter},
  title     = {Nonarchimedean functional analysis},
  series    = {Springer Monographs in Mathematics},
  publisher = {Springer-Verlag},
  address   = {Berlin},
  year      = {2002},
  doi       = {10.1007/978-3-662-04728-6}
}

@article{Kubzdela,
  author  = {Kubzdela, Albert},
  title   = {Isometries, {M}azur-{U}lam theorem and {A}leksandrov problem for non-{A}rchimedean normed spaces},
  journal = {Nonlinear Anal.},
  volume  = {75},
  year    = {2012},
  number  = {4},
  pages   = {2060--2068},
  doi     = {10.1016/j.na.2011.10.006}
}

@article{Bishop,
  author  = {Bishop, Edward},
  title   = {Isometries of the {$p$}-adic numbers},
  journal = {J. Ramanujan Math. Soc.},
  volume  = {8},
  year    = {1993},
  number  = {1-2},
  pages   = {1--5}
}

@book{Kechris,
  author    = {Kechris, Alexander S.},
  title     = {Classical descriptive set theory},
  series    = {Graduate Texts in Mathematics},
  volume    = {156},
  publisher = {Springer-Verlag, New York},
  year      = {1995},
  pages     = {xviii+402},
  doi       = {10.1007/978-1-4612-4190-4}
}

@book{Kuratowski,
  author    = {Kuratowski, Casimir},
  title     = {Topologie. {V}ol. {I}},
  series    = {Monografie Matematyczne [Mathematical Monographs]},
  volume    = {Tom 20},
  publisher = {Pa\'{n}stwowe Wydawnictwo Naukowe, Warsaw},
  year      = {1958},
  pages     = {xiii+494},
  note      = {In French}
}

@article{TomkowiczBoundedSets,
  author   = {Tomkowicz, Grzegorz},
  title    = {On bounded paradoxical sets and {L}ie groups},
  journal  = {Geom. Dedicata},
  fjournal = {Geometriae Dedicata},
  volume   = {218},
  year     = {2024},
  number   = {3},
  note     = {Paper No. 72},
  issn     = {0046-5755,1572-9168},
  mrclass  = {03E05 (20M05 22E15 28C10)},
  mrnumber = {4732359},
  doi      = {10.1007/s10711-024-00923-1},
  url      = {https://doi.org/10.1007/s10711-024-00923-1}
}

\end{document}